\newtheorem{theorem}{Theorem}[section]
\newtheorem{lemma}[theorem]{Lemma}
\newtheorem{corollary}[theorem]{Corollary}
\theoremstyle{definition}
\newtheorem{example}[theorem]{Example}
\newtheorem{definition}[theorem]{Definition}
\newcommand{\N}{\mathbb N}
\newcommand{\Z}{\mathbb Z}
\newcommand{\Q}{\mathbb Q}
\newcommand{\g}{\mathrm{g}}
\newcommand{\F}{\mathrm{F}}
\newcommand{\m}{\mathrm{m}}
\newcommand{\e}{\mathrm{e}}
\newcommand{\Ap}{\mathrm{Ap}}
\newcommand{\dsum}{\displaystyle\sum}
\numberwithin{equation}{section}
\begin{document}
{
\address{Departamento de {\'A}lgebra, Universidad de Granada,
Granada 18071, Spain}

\email{vblanco@ugr.es}

\author{V{\'i}ctor Blanco}
\thanks{This work has been partially supported by the Junta de Andaluc\'ia grant number {\it P06--FQM--01366} and Ministerio de Ciencia e Innovaci\'on of Spain project Ref. {\it MTM2010-19576-C02-01}}

\keywords{integer programming, multiobjective optimization, optimization over an efficient set, numerical semigroups, factorization theory}
\subjclass[2010]{90C10, 90C29, 20M14}

\begin{abstract}
In this paper we present a mathematical programming formulation for the $\omega$ invariant of a numerical semigroup for each of its minimal generators.
 The model consists of solving a problem of optimizing a linear function over the efficient set of a multiobjective linear integer program. We offer a methodology to solve this problem and we provide some computational experiments to show the applicability of the proposed algorithm.
\end{abstract}

\title[The omega invariant by Optimization Over an Eff. set]{On the computation of the Omega invariant of a numerical semigroup by optimizing over an efficient integer set}

\maketitle

\section{Introduction} \label{sec:1}

Applying discrete optimization usually means solving real-world problems in industry, transportation, location, etc, but there are many problems
arising in abstract mathematics, specially in commutative algebra, that can be formulated as integer programming problems and that have centered the attention of many researchers. In particular, problems in classical number theory are specially interesting and only  a few papers appear in the literature where operational researchers deal with these type of problems.

In the recent years, there have appeared several methods based on algebraic tools to solve  (single and multi-objective) discrete optimization problems as Gr\"obner bases \cite{blanco-puerto09, conti-traverso91} or short generating functions \cite{barvinok, blanco-puerto09a, deloera04, deloera09, lasserre}, amongst others. In this paper we intend to give an inverse
viewpoint, that is, how optimization tools can help to solve algebraic problems and then getting closer both mathematical fields, algebra and operational research.

Here, we study a specific problem in a basic algebraic structure: numerical semigroups. It is a simple framework  where developing
optimization tools to make computations that are usually done by brute force.

A numerical semigroup is a set of non-negative integers, closed under
addition, containing zero and such that its complement in $\N$ is finite. Details about the theory of numerical semigroups can be found in the recent monograph by Rosales and Garc\'ia-S\'anchez \cite{Ro-GS09}.

It can be checked that many of the main notions in the theory of semigroups are, in fact, defined or characterized as discrete optimization problems. For instance, for a numerical semigroup $S$,
its multiplicity, $\m(S)$, is the least integer belonging to $S$, that is
$$
\m(S) = \min\{ s \in S\}.
$$
Note that, if the reader is not familiar with numerical semigroups, may be it may not be possible to see why the above problem is a discrete optimization problem, but any numerical semigroup is minimally generated by a finite
set of integers $\{n_1, \ldots, n_p\}$ with $gcd(n_1, \ldots, n_p)=1$, and then, any element in $S$ can be expressed as a linear combination, with non-negative integer coefficients, of this set. Hence, the multiplicity of $S$ can be written as
$$
\m(S) = \min\{n_0: \dsum_{i=1}^p x_i\,n_i=n_0, x \in \Z_+^p, n_0\in \Z_+\}.
$$
In this case, it is clear that $\m(S)= \min\{n_1, \ldots, n_p\}$.

Other well-known index is the \textit{Frobenius number}. If $S$ is a numerical semigroup, its Frobenius number, $\F(S)$, is the largest integer not belonging to $S$. This index is well defined since by definition
$\N\backslash S$ is finite. Then,

$$
\F(S)=\max \{n \in \Z\backslash S\}
$$

Selmer proved that when a numerical semigroup, $S$, is generated by only two elements $a$ and $b$, then $\F(S)=ab - (a+b)$. However, there have not been many successes in finding formulas to compute the
Frobenius number when the numerical semigroup is generated by more than two integers. Unfortunately, this problem cannot be expressed as a linear integer problem.

The last classical index that we want to mention is the genus of a numerical semigroup. If $S$ is a numerical semigroup, the \textit{genus} of $S$, $\g(S)$, is the cardinal of $\N \backslash S$ (its number of holes), that is, $\g(S)=\# \N\backslash S$.

New arithmetic invariants for commutative semigroups, and then, in particular for numerical semigroups that have recently appeared in the literature are the tame degree, the catenary degree and the $\omega$ invariant (see \cite{AChKT10, B-GS-G10, C-G-L09, Ge-Ha08a, Ge-Ha08b, Ge-Ha-Le07,  Ge-Ka10a, Om10a} for a complete algebraic description of these indices). These arithmetic invariants come from the field of commutative algebra and factorization theory, and their definitions are not intuitive from a combinatorial viewpoint. In particular, the $\omega$ invariant allows to derive crucial finiteness properties in the theory of non-unique factorizations for semigroups (tameness). However, in \cite{B-GS-G10} the authors give characterizations
of some of these concepts that allow the optimization approach giving here.

    The goal of this paper is to present a new method for computing the $\omega$ invariant of a numerical semigroup by optimizing a linear function over
the non-dominated solutions of a multiobjective integer programming
problem. In contrast to usual integer programming problems, in multiobjective problems there
are several objective functions to be optimized. Usually, it is not possible to optimize all the objective functions simultaneously since
objective functions induce a partial order over the vectors in the feasible region, so a different
notion of solution is needed. A feasible vector is said to be a non-dominated (or Pareto optimal )
solution if no other feasible vector has component-wise larger objective values. The evaluation
through the objectives of a non-dominated solution is called efficient solution.

There are nowadays relatively few exact methods to solve general
multiobjective integer and linear problems (see \cite{ehrgott02}).
Some of them combine optimality of
the returned solutions with adaptability to a wide range of
problems ~\cite{zionts79, marcotte86}. Other methods, such as Dynamic Programming, are general
methods for solving, not very efficiently, general families of
optimization problems (see \cite{karwan-villareal82}). A different
approach, as the Two-Phase method (see \cite{ulungu-teghem95}),
looks for supported solutions (those that can be found as
solutions of a single-objective problem over the same feasible
region but with objective function a linear combination of the
original objectives) in a first stage and non-supported solutions
are found in a second phase using the supported ones. The
Two-phase method combines usual single-criteria methods with
specific multiobjective techniques.

Optimizing over an efficient set consists then of maximizing (or minimizing) a single objective function over the solutions of a multiobjective
problem. Since it is not known the structure of the solution set of that problem, different techniques that those used to
solve single or multiobjective problems are needed if we do not want to enumerate all the efficient solutions and then evaluate the function over all of them. Although there have appeared some papers analyzing
 this problem when the problem is continuous (see for instance, \cite{benson} or \cite{yamamoto}), only a few study the discrete case (\cite{abbas2006, jorge, nguyen92}). We apply an adaptation of the algorithm presented in \cite{jorge} to compute the $\omega$ invariant of a numerical semigroup. We choose that methodology since the approaches given in \cite{abbas2006} and \cite{nguyen92} does not fit the requirement of our problem.

 Although there is no need of computing the $\omega$ invariant of a numerical semigroup in a short time, the hardness of computing this index even for a small number of generators makes it difficult to analyze the invariant. Furthermore, algebraic researchers are showing an interest in studying certain combinatorial properties of non-unique
factorizations (in particular by the $\omega$ invariant), but this difficulty makes impossible to go further in the study of this index. In \cite{AChKT10} the authors give an algorithm to compute the $\omega$ invariant of a numerical semigroup by using techniques from factorization theory in monoids, and they give a short list of small problems that can be solved by using that methodology. We provide here a new method that avoid the complete enumeration of the non-dominated solutions of the subjacent multiobjective problem, and then, we are able to compute more efficiently the $\omega$ invariant of a numerical semigroup. This allows us to obtain computations that cannot be done up to the moment by using the brute force algorithm and then, better analysis of the behavior of this invariant. In the last section of the paper we show that our method, not only improve the CPU computation times, but the sizes of the problems that can be solved. However, in this paper we do not underestimate the algorithm in \cite{AChKT10}. Our algorithm seems to be able to compute the invariant for larger instances, but the approach in \cite{AChKT10} allows to give interesting conclusions for some special families of numerical semigroups. We center here in the computational side of the problem.
Other advantage of formulating the omega index as a mathematical programming problem is that a better analysis of the constraints and the objective function may improve the general running of the method.

In Section \ref{sec:2} we introduce the preliminary definitions and results for the rest of the paper. There, we describe the objects under study, the numerical semigroups, and the index we are interested to compute, the $\omega$-invariant.
An algorithm for optimizing a linear function over an efficient integer set  is described in Section \ref{sec:3}, and it allows to compute efficiently the $\omega$-invariant of a numerical semigroup. In Section \ref{sec:4} we show some computational tests done to
 check the efficiency of the algorithm. This paper is almost self-contained and written to be accessible for both algebraists and operational researchers.

\section{Preliminaries}
\label{sec:2}

We begin this section introducing the structure under the study of this paper. A numerical semigroup is a subset $S$ of $\N$ (here $\N$ denotes the set of non-negative integers) closed under
addition, containing zero and such that $\N\backslash S$ is finite. We say that $\{n_1, \ldots, n_p\}$ is a system of
generators of $S$ if $S=\{\dsum_{i=1}^p n_i x_i: x_i \in \N, i=1, \ldots, p\}$. We denote $S=\langle n_1, \ldots, n_p\rangle$ if $\{n_1, \ldots, n_p\}$
is a system of generators of $S$.  Note that any numerical semigroup has an unique minimal system of generators, in the sense that
no proper subset of it is a system of generators. Then, we assume through this paper that if $S=\langle n_1, \ldots, n_p\rangle$ then
$\{n_1, \ldots, n_p\}$ is the minimal system of generators of $S$ and $n_1 < \cdots < n_p$. The cardinality of the minimal system of generators, $p$, is called the \textit{embedding
dimension} of $S$.

Note that, for $S=\langle n_1, \ldots, n_p\rangle$ a numerical semigroup we can define a homomorphism $\pi: \N^p \rightarrow S$ as
$$
\pi(x_1, \ldots, x_p) = \dsum_{i=1}^p x_i\,n_i
$$
usually called \textit{the factorization homomorphism}.  For any element $s$ in $S$ we define $\mathsf Z (s)$ as the set $\pi^{-1}(s) =\{x \in \N^p: \dsum_{i=1}^p x_i n_i = s\}$. The \textit{factorization monoid} is the set $\mathsf Z (S) = \pi^{-1}(S)$.

For these semigroups we are interested in computing an index that measures how far away generators of a numerical semigroup are from primes: the $\omega$ invariant (see \cite{Ge-Ha-Le07,
Ge-Ha08a, Ge-Ha08b, Ge-Ka10a} for further details).

\begin{definition}
\label{defi:omega}
Let  $S=\langle n_1, \ldots, n_p\rangle$  be  a numerical semigroup. For $s \in S$, let \ $\omega (S, s)$ \ denote
the
      smallest \ $N \in \N_0 \cup \{\infty\} $ \ with the following
      property{\rm \,:}
      \begin{enumerate}
      \smallskip
      \item[] For all $n \in \N $ and $s_1, \ldots, s_n \in S $, if
              $\dsum_{i=1}^p s_i - s \in S$, then there exists a
              subset $\Omega \subset \{1, \ldots, n\} $ such that $|\Omega | \le N $ and
              \[
              \sum_{j \in \Omega} s_j - s \in S\,.
              \]
      \end{enumerate}
      Furthermore, we set
      \[
      \omega (S) = \max \{ \omega (S, n_i): i = 1, \ldots, p\} \in \N.\]
\end{definition}
Note that $\omega(S) <\infty$ since any numerical semigroup satisfies the ascending chain condition for $v$-ideals (see \cite{B-GS-G10} and the references therein for further details).

It is not direct to connect the above definition with optimization theory. However, the following result, proved in \cite{B-GS-G10}, is crucial to
develop the integer programming method proposed in this paper. Here, if $A \subseteq \N^p$, for some $p\leq 1$, we denote by \text{\rm Minimals} $A$ the set of minimals elements in $A$ with respect to the component-wise order in $\N^p$.
\begin{theorem}[\cite{B-GS-G10}]
\label{theo:omega}
Let $S=\langle n_1, \ldots, n_p\rangle$ be a numerical semigroup.
\begin{enumerate}
\item For every  $s \in S$, $\omega (S, n) = \max \{ \dsum_{i=1}^p x_i: x \in \text{\rm Minimals} \,\mathsf Z (n + S)\}$,
\item $\omega (S) = \max \{ \dsum_{i=1}^p x_i : x \in \text{\rm Minimals} \bigl(
      \mathsf Z (n_i + S) \bigr) \ \text{for some} \ i=1, \ldots, p \bigr\}$.
\end{enumerate}
\end{theorem}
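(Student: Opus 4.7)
The plan is to prove part (1), from which part (2) follows immediately by taking the maximum over $i=1,\ldots,p$, per the definition of $\omega(S)$. Write $N^*(n) := \max\{\sum_i x_i : x \in \text{Minimals}\,\mathsf Z(n+S)\}$. Two routine observations are worth recording first. The set $\mathsf Z(n+S)$ is upward-closed in the componentwise order on $\N^p$, since adding any nonnegative integer combination of generators to an element of $n+S$ keeps the sum in $n+S$; thus its minimal elements are well-defined and every $x \in \mathsf Z(n+S)$ dominates one. Moreover, a sequence of generators $(n_{i_1},\ldots,n_{i_k})$ with $\sum_j n_{i_j}-n\in S$ is in bijection with a vector $x\in\mathsf Z(n+S)$ via multiplicities, and subsets of its index set correspond to vectors $y\leq x$ with $|\Omega|=\sum_i y_i$.

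For the inequality $\omega(S,n)\geq N^*(n)$, I would pick a minimal $x \in \mathsf Z(n+S)$ attaining $N^*(n)$ and feed the sequence consisting of $x_i$ copies of $n_i$ into the defining property of $\omega(S,n)$. The resulting subset $\Omega$ has $|\Omega|\leq\omega(S,n)$ and corresponds to some $y\leq x$ in $\mathsf Z(n+S)$; minimality of $x$ forces $y=x$, hence $|\Omega|=|x|=N^*(n)$.

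For the reverse inequality, I would start from an arbitrary admissible sequence $s_1,\ldots,s_k\in S$ with $\sum_j s_j-n\in S$, fix a generator-expansion $s_j=\sum_\ell g_{j\ell}$ of each element, and consider the concatenated generator sequence, whose associated factorization $x$ still lies in $\mathsf Z(n+S)$. Picking a minimal $y \leq x$ inside $\mathsf Z(n+S)$ gives $|y|\leq N^*(n)$. Choose any set $A$ of pairs $(j,\ell)$ whose multiplicity vector equals $y$, and set $\Omega:=\{j:(j,\ell)\in A\text{ for some }\ell\}$; then $|\Omega|\leq|A|=|y|\leq N^*(n)$, and
\[
\sum_{j\in\Omega}s_j-n=\Bigl(\sum_{(j,\ell)\in A}g_{j\ell}-n\Bigr)+\sum_{\substack{j\in\Omega\\(j,\ell)\notin A}}g_{j\ell}\in S,
\]
because both summands lie in $S$ (the first by the choice of $A$, the second by closure of $S$ under addition). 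The main obstacle is precisely this last step: an arbitrary $y\leq x$ need not coincide with a union of whole atom-expansions of the $s_j$, and the remedy is the coarsening above, in which one over-includes $\Omega$ and absorbs the extra generators into $S$ via closure. Since the sequence $s_1,\ldots,s_k$ was arbitrary, $\omega(S,n)\leq N^*(n)$, and combining the two inequalities completes the proof of (1).
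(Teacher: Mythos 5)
Your proof is correct, but there is nothing in the paper to compare it against: Theorem \ref{theo:omega} is stated here without proof and imported wholesale from \cite{B-GS-G10}, where it is established in the more general setting of atomic/Krull monoids. What you have produced is therefore a self-contained elementary argument that this paper does not contain, and it holds up. The preliminary observations are exactly what is needed: $\mathsf Z(n+S)$ is upward closed, every element dominates a minimal one, and (by Dickson's lemma) there are only finitely many minimal elements, so the maximum defining $N^*(n)$ is attained. The lower bound $\omega(S,n)\ge N^*(n)$ is routine once you feed a length-maximal minimal factorization into Definition \ref{defi:omega} and use minimality to force $\Omega$ to be everything. The real content is the upper bound, and you correctly isolate and resolve its one genuine difficulty: a minimal $y\le x$ in $\mathsf Z(n+S)$ need not be a union of whole blocks of the concatenated generator expansion of $s_1,\dots,s_k$, so one takes $\Omega$ to be the set of blocks that $A$ meets and absorbs the leftover generators of those blocks into $S$ by closure under addition; this can only enlarge $\sum_{j\in\Omega}s_j-n$ by an element of $S$ and can only make $|\Omega|$ smaller than $|A|=|y|\le N^*(n)$, so both requirements of the definition are met. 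Two small remarks: the statement as printed has typographical slips ($s$ versus $n$, and $\sum_{i=1}^p s_i$ should be $\sum_{i=1}^n s_i$), and your proof correctly reads the intended statement; also, your upper-bound argument incidentally re-proves that $\omega(S,n)<\infty$, which the paper instead justifies by appeal to the ascending chain condition on $v$-ideals.
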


We denote by $|x|=\dsum_{i=1}^p x_i$ the \textit{length} of $x\in\N^p$. From the above result, compute the $\omega$ invariant consists of maximizing the lengths of the minimal elements in the factorization monoid.

In the next section we connect the above result with the problem of optimizing a linear function over the efficient set of a multiobjective integer linear problem. In what follows, we recall the general structure of the optimization problems that are
involved in the mathematical programming formulation of the omega invariant. This part of the paper can be skipped if the reader is familiar with this theory.

An \textit{integer linear programming} problem consists of finding the largest (or smallest) value of a linear operator (objective function) $c: \Z^n_+ \rightarrow \Z$ when $c$ only takes values over a feasible region
defined by a system of linear equations:

\begin{eqnarray*}
\begin{split}
 a_{11}x_1 &+ a_{12}x_2 +& \cdots &+ a_{1n}x_n &= b_1\\
& & \ddots & &\\
a_{m1}x_1 &+ a_{m2}x_2 +& \cdots &+ a_{mn}x_n &= b_m\\
\end{split}
\end{eqnarray*}
where $a_{ij}, b_j \in \Q$ for all $i\in \{1, \ldots, n\}$ and $j\in \{1, \ldots, m\}$. If we denote by $A=(a_{ij}) \in \Q^{m\times n}$ and by $b=(b_j) \in \Q^m$, this problem can be
written as

\begin{equation}
 \label{eq:ip}
\begin{array}{lll}
 \max \; c(x)&&\\
s.t.&&\\
& Ax&=b\\
& x &\in \Z^n_+
\end{array}\tag{\rm IP}
\end{equation}

It is well-known \cite{schrijver} that the complexity of solving the above problem is NP-complete, and difficult to solve even for a small number of variables (see \cite{cornuejols}). However, when the constraints ($Ax=b$) have a
concrete structure, or the formulation comes from a problem where more information is known, it is possible to develop exact (or heuristic with high precision) algorithms that allow to solve efficiently these problems (see \cite{nw}).
Recently, some papers have appeared where algebraic tools are used to solve problem \eqref{eq:ip} (see \cite{conti-traverso91, deloera04, lasserre}).

If we consider in \eqref{eq:ip} a linear operator $C: \Z^n_+ \rightarrow \Z^k$ (with $k\ge 2$) we get a integer linear multiobjective optimization problem:
\begin{equation}
 \label{eq:mip}
\begin{array}{lll}
 \max (\text{ or } \min)\; C(x)&&\\
s.t.&&\\
& Ax&=b\\
& x \in \Z^n_+
\end{array}\tag{\rm MIP}
\end{equation}

Since, in this case, objective function defines a non-total order over the set of solutions of $Ax=b$ with $x \in \Z^n_+$, it is needed to define what we understand about solving \eqref{eq:mip}. We say that a vector $x\in \Z^n_+$ with $Ax=b$ is a \textit{non-dominated (or Pareto optimal) solution} if there is no other vector $y \in \Z^n_+$ with
$Ay=b$ and such that $C(y) \leq C(x)$ and $C(x)\neq C(y)$ where $\leq$ denotes the component-wise order in $\Z^k$. Then, the solution of \eqref{eq:mip} is a set of feasible solutions that are not comparable and such that we cannot find better solutions with respect to the partial ordering induced by $C$. In the case we want to minimize instead of maximize, the ordering is adapted conveniently.

In general, the complexity of solving multiobjective integer problems is $\sharp$P-hard and just a few algorithms appear in the literature to solve exactly this kind of problems. The recent works by Blanco and Puerto \cite{blanco-puerto09}, De Loera et. al \cite{deloera09} and Blanco and Puerto \cite{blanco-puerto09a} present algebraic approaches for solving these problems. An interesting monograph for multiobjective optimization is \cite{ehrgott02}.

In the next and last step we recall the goal of optimizing over an efficient integer region. Note that given the set of non-dominated solutions of \eqref{eq:mip} we may be interested in finding
 those non-dominated solutions that maximize a linear operator. Then, we are interested in solving the following optimization problem:
\begin{equation}
 \label{eq:oes}
\begin{array}{lll}
 \max \; c(x)&&\\
s.t.& \text{$x$ is a non-dominated solution of \eqref{eq:mip}}
\end{array}\tag{\rm OES}
\end{equation}
It is not possible, in general, to express the constraint of the above problem as a linear system of equations. Actually, as far as we know, there are no descriptions for the set of solutions of a multiobjective problem, and then, to solve \eqref{eq:oes} one may think of
solving first the multiobjective problem, and then, selecting those solutions with largest values of $c$. This process is not effective and it makes it difficult when the efficient set has a large size.

However, there exists a few algorithms to solve \eqref{eq:oes} without enumerating the complete set of non-dominated solutions of \eqref{eq:mip}. In particular, we will adapt the algorithm presented in \cite{jorge} to solve the problem of optimizing over the efficient integer set related to the $\omega$-invariant. The following result states the structure of the optimization problem that solves the $\omega$-invariant.

\begin{theorem}
\label{theo:1}
Let $S=\langle n_1, \ldots, n_p\rangle$ be a numerical semigroup. Then, for each $j\in \{1, \ldots, p\}$, $\omega(S, n_j)$ is the solution of the following OES problem:
\begin{equation}
\label{eq:omega}
\begin{array}{lll}
\max \;c(x)=\dsum_{i=1}^n x_i& & \\
s.t.& &\\
&x \in \min C(x)=(x_1, \ldots, x_p)&\\
& s.t. &\\
& \dsum_{i=1}^p x_i n_i - \dsum_{i=1}^p y_i n_i = n_j&\\
& x, y \in \Z_+^p&
\end{array}\tag{${\rm OES}_j^0$}
\end{equation}
\end{theorem}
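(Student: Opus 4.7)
The plan is to derive the theorem directly from Theorem \ref{theo:omega} by unpacking the definitions of \eqref{eq:mip} and \eqref{eq:oes} in the particular setting of the formulation $({\rm OES}_j^0)$. By Theorem \ref{theo:omega}(1), it suffices to prove
\[
\max\{c(x) : x \in \text{Minimals}\,\mathsf Z(n_j+S)\}
\;=\;
\max\{c(x) : x \text{ appears in a non-dominated solution of the inner MIP}\}.
\]
So the proof reduces to showing that the projection of the non-dominated set of the inner multiobjective program onto the $x$-coordinates coincides with $\text{Minimals}\,\mathsf Z(n_j+S)$.

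First I would identify the projection of the feasible region onto the $x$-coordinates. A pair $(x,y)\in\Z_+^{2p}$ is feasible iff $\sum_i x_i n_i = n_j + \sum_i y_i n_i$, i.e.\ iff $\sum_i x_i n_i \in n_j + S$; thus the projection equals $\mathsf Z(n_j+S)$. This uses only the definitions of $\mathsf Z(\cdot)$ and of the semigroup $S$.

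Next I would use the fact that the inner objective vector $C(x,y)=(x_1,\dots,x_p)$ depends only on $x$. Unpacking the definition of non-dominated solution given in the paper: $(x,y)$ is non-dominated iff there is no feasible $(x',y')$ with $x'\le x$ componentwise and $x'\neq x$. Thus $(x,y)$ is non-dominated iff $x$ is a minimal element (w.r.t.\ the componentwise order on $\N^p$) of the projection $\mathsf Z(n_j+S)$. This gives both inclusions: if $x\in\text{Minimals}\,\mathsf Z(n_j+S)$, pick any $y$ certifying feasibility and $(x,y)$ is non-dominated; conversely the $x$-part of any non-dominated $(x,y)$ must be minimal in $\mathsf Z(n_j+S)$.

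Finally, since $c(x)=\sum_i x_i$ only depends on $x$, maximizing $c$ over non-dominated solutions of the inner MIP is the same as maximizing $|x|$ over $\text{Minimals}\,\mathsf Z(n_j+S)$, which equals $\omega(S,n_j)$ by Theorem \ref{theo:omega}(1). The only genuinely subtle point, and what I would write out most carefully, is the equivalence in the previous paragraph between non-dominance of $(x,y)$ and minimality of $x$ in the projection; everything else is a straightforward translation between the algebraic and optimization languages.
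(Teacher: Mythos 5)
Your proposal is correct and follows essentially the same route as the paper's own (very terse) proof: identify the projection of the feasible region onto the $x$-variables with $\mathsf Z(n_j+S)$ and then invoke Theorem \ref{theo:omega}(1). The only difference is that you explicitly spell out the equivalence between non-dominance of a pair $(x,y)$ under $C(x,y)=(x_1,\ldots,x_p)$ and minimality of $x$ in the projection, a step the paper leaves implicit; this is a sound and welcome elaboration, not a different argument.
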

\begin{proof}
Observe that the elements in $\text{\rm Minimals} \bigl(
      \mathsf Z (n_j + S) \bigr)$ are those minimal elements with respect to the component wise ordering in $\mathsf Z (n_j + S)$, and $\mathsf Z (n_j + S)= \{(x_1, \ldots, x_p): \dsum_{i=1}^p n_i\,x_i \in n_j + s, \text{ for some } s \in S\} = \{(x_1, \ldots, x_p): \dsum_{i=1}^p n_i\,x_i = n_j + \dsum_{i=1}^p n_i\,y_i \text{  for some } y_i\in \N\}$.
\end{proof}
Problem \eqref{eq:omega} consists of maximizing the lengths of the non-dominated solutions of a multiobjective problem. We denote by $({\rm SMIP}_j)$ that subjacent multiobjective problem

From the formulation of the $\omega$-invariant given in the above result, it is clear that if we denote by $\e_j$, the $p$-tuple having a $1$ as its $j$th entry and zeros otherwise, for each $j \in \{1, \ldots, p\}$, $\e_j \in \mathsf Z (n_j + S)$ since we can always write $n_j = \dsum_{i\neq j} 0\cdot n_i + 1\cdot n_j$ and $(0, \ldots, 0)$ is not a feasible solution. Consequently, if the multiobjective subproblem has more non-dominated solutions they must have $x_j=0$ to be not comparable with $\e_j$. The next lemma states that these unit vector cannot be solution of the multiobjective subjacent  problem.
\begin{lemma}
\label{lemma:1}
Let $S=\langle n_1, \ldots, n_p\rangle$ be a numerical semigroup. Then, $\omega(S, n_j) > 1$ for all $j \in \{1, \ldots, p\}$.
\end{lemma}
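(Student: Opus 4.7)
The plan is to apply Theorem \ref{theo:omega}(1), which reduces the claim to exhibiting some $x^\ast \in \text{\rm Minimals}\,\mathsf Z(n_j + S)$ with $|x^\ast| \geq 2$. The paragraph preceding the lemma already records that $\e_j \in \mathsf Z(n_j + S)$ and that any minimal element distinct from $\e_j$ must be incomparable to it, i.e.\ must have $j$-th coordinate equal to zero. So the task splits into producing a minimal element with $x_j = 0$ and then ruling out the possibility that its length is $1$.

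Assuming $p \geq 2$ (the case $p = 1$, corresponding to $S = \N$, is tacitly excluded, since one checks $\omega(\N,1)=1$), fix any index $i \in \{1, \ldots, p\} \setminus \{j\}$. First I would observe that the vector $n_j\,\e_i$ belongs to $\mathsf Z(n_j + S)$: indeed $\pi(n_j\,\e_i) = n_i n_j = n_j + n_j(n_i - 1)$, and $n_j(n_i - 1) \in S$ since $n_i \geq n_1 \geq 2$ forces this to be a positive integer multiple of $n_j$. Because the component-wise order on $\N^p$ is well-founded, I can then pick $x^\ast \in \text{\rm Minimals}\,\mathsf Z(n_j + S)$ with $x^\ast \leq n_j\,\e_i$. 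Such an $x^\ast$ is supported only on the $i$-th coordinate, so $x^\ast = c\,\e_i$ for some integer $c$ with $1 \leq c \leq n_j$; in particular $x^\ast_j = 0$.

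The only substantive step is to rule out $c = 1$. If $x^\ast = \e_i$, then $n_i \in n_j + S$, so $n_i - n_j \in S$; since $i \neq j$ and the generators are distinct, this difference is strictly positive, producing a decomposition $n_i = n_j + s$ with both $n_j$ and $s$ positive elements of $S$. This contradicts the fact that $n_i$ belongs to the \emph{minimal} system of generators of $S$. Hence $c \geq 2$ and $\omega(S, n_j) \geq |x^\ast| \geq 2 > 1$. The main obstacle, modest as it is, is picking the right witness to descend from: $n_j\,\e_i$ is the natural choice because it swaps the roles of $n_i$ and $n_j$ inside the integer $n_i n_j$, guaranteeing that descent lands in the hyperplane $\{x_j = 0\}$; after that, both the descent and the minimal-generator argument are routine.
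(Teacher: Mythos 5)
Your proof is correct, but it follows a genuinely different route from the one in the paper. The paper argues directly from Definition \ref{defi:omega}: assuming $\omega(S,n_j)=1$ forces $n_j$ to behave like a prime element of $S$ (whenever $n_j$ divides a sum it divides one summand), and it then invokes the fact from \cite{Ro-GS09} that a numerical semigroup has no primes. You instead pass through the characterization of Theorem \ref{theo:omega} and produce an explicit witness: starting from $n_j\,\e_i\in\mathsf Z(n_j+S)$ for some $i\neq j$, you descend (via Dickson's lemma) to a minimal element $c\,\e_i$, and rule out $c=1$ because $n_i=n_j+s$ with $s\in S\setminus\{0\}$ would contradict minimality of the generating system; hence $c\ge 2$ and $\omega(S,n_j)\ge 2$. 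Both arguments are sound. Yours is more self-contained and constructive --- it does not appeal to the external ``no primes'' fact, it explicitly isolates the degenerate case $S=\N$ (which the paper's statement tacitly excludes, since its own argument fails there), and the witness it produces is precisely a nondominated point of $({\rm SMIP}_j)$ supported away from coordinate $j$, which dovetails with the discussion preceding Corollary \ref{corollary:1}. The paper's proof is shorter but leans on semigroup-theoretic background. The only step you leave implicit is that $c\neq 0$, which is immediate since $0\notin\mathsf Z(n_j+S)$ (as $-n_j\notin S$).
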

\begin{proof}
From Definition \ref{defi:omega}, $\omega(S, n_j)\ge 1$. Let us see that $\omega(S, n_j)\neq 1$. Assume that $\omega(S, n_j)=1$, then, if $\dsum_{i=1}^n s_i - n_j \in S$, there exists some $s_k$ such that $s_k-n_j \in S$. That is, if $n_j= \dsum_{i=1}^n s_i + s$ for some $s \in S$, then $n_j = s_k + s'$ for some $s' \in S$, for any $s_1, \ldots, s_n$ and any $n \in \N$. It is clear that it is not possible since that assumption is the same as stating that the generators of $S$ are prime, but there are no primes in a numerical semigroup (see \cite{Ro-GS09}).
\end{proof}

Furthermore, note that the feasible region in \eqref{eq:omega} is not bounded. However, by solving linear integer problems we can obtain upper bounds for each of the variables $x_i$ involved in the problem, making bounded the feasible region. For each $j \in \{1, \ldots, p\}$ and $k\neq j$, let $ub_{jk}$ the optimal value of the following integer program:
\begin{equation}
\label{eq:ub}
\begin{array}{lll}
\min \; x_j& & \\
& s.t. &\\
& x_j n_j - \dsum_{i=1}^p y_i n_i = n_k&\\
& y_k=0\\
& x_j \in \Z_+, y \in \Z_+^p&
\end{array}\tag{${\rm UB}_{jk}$}
\end{equation}
The solutions of \eqref{eq:ub} are those feasible solutions of \eqref{eq:oes} in the form $x_j\,e_j$. An upper bound for each variable in \eqref{eq:omega} is then $ub_j = \max_k \{ub_{jk}\}$.

Note that if $x$ is a non-dominated solution of the multiobjective subproblem in \eqref{eq:oes}, and $x_j\neq 0$, then $y_j=0$, and also, if $y_j\neq 0$, then $x_j=0$. It is clear since if $x_j$ and $y_j$ have nonzero values, then, we can always find a feasible solution $(x', y')$ where $x'_j=\max\{0, x_j-y_j\}$, $y'_j=\max\{0, y_j-x_j\}$, and $x'_i=x_i$ and $y'_i=y_i$ for $i\neq j$. This result is also proved in \cite{B-GS-G10} from an algebraic viewpoint, in terms of the supports of $x$ and $y$.

As a direct consequence of Lemma \ref{lemma:1}, Theorem \ref{theo:1} and the comment above we get the following result:
\begin{corollary}
\label{corollary:1}
Let $S=\langle n_1, \ldots, n_p\rangle$ be a numerical semigroup. Then, for each $j\in \{1, \ldots, p\}$, $\omega(S, n_j)$ is the solution of the following OES problem:
\begin{equation}
\label{eq:oesf}
\begin{array}{lll}
\max \;\dsum_{i=1}^n x_i& & \\
s.t.& &\\
&x \in \min (x_1, \ldots, x_p)&\\
& s.t. &\\
& \dsum_{i=1}^p x_i n_i - \dsum_{i=1}^p y_i n_i = n_j&\\
& x_i \leq ub_i\\
& x_j =0\\
& x, y \in \Z_+^p&
\end{array}\tag{${\rm OES}_j$}
\end{equation}
\end{corollary}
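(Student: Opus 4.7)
The plan is to derive the corollary from Theorem \ref{theo:1} by showing that appending the two additional restrictions $x_j=0$ and $x_i\le ub_i$ to the OES problem \eqref{eq:omega} does not change its optimal value. Both restrictions are motivated by the discussion immediately preceding the statement: Lemma \ref{lemma:1} rules out $\e_j$ as an optimizer, and the auxiliary integer programs \eqref{eq:ub} supply certificates of factorizations concentrated on a single generator.

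First I would show that every optimal minimal element $x^\star$ of $\mathsf Z(n_j+S)$ satisfies $x^\star_j=0$. The vector $\e_j$ lies in $\mathsf Z(n_j+S)$ (take $y=0$ in the defining equation), and any feasible $x$ with $x_j\ge 1$ satisfies $\e_j\le x$ component-wise, so either $x=\e_j$ or $x$ is strictly dominated by $\e_j$ and hence not minimal. Thus $\e_j$ is the only minimal element of $\mathsf Z(n_j+S)$ with $x_j\neq 0$, and $|\e_j|=1$. By Lemma \ref{lemma:1}, $\omega(S,n_j)>1$, so Theorem \ref{theo:1}(1) tells us that the maximum is attained on some minimal element with $x_j=0$, and the extra equation $x_j=0$ can be imposed without loss of generality.

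Next I would show that the inequalities $x_i\le ub_i$ hold on every such minimal. Reading off the constraints of \eqref{eq:ub} with $k=j$, the vector $ub_{ij}\,\e_i$ belongs to $\mathsf Z(n_j+S)$: indeed, $ub_{ij}\,n_i=n_j+\sum_{l\neq j}y_l n_l$ for the optimal $y$, which witnesses $ub_{ij}\,n_i\in n_j+S$. If $x$ is minimal in $\mathsf Z(n_j+S)$ with $x_j=0$ and $x_i>ub_{ij}$ for some $i$, then $ub_{ij}\,\e_i<x$ component-wise, contradicting minimality. Therefore $x_i\le ub_{ij}\le ub_i$ holds automatically, and this inequality can likewise be added to the program without discarding any relevant minimal.

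Finally I would confirm that intersecting the feasible region with $\{x_j=0,\ x_i\le ub_i\}$ does not create spurious minimal elements: if $x$ is minimal in the restricted region and some $x'<x$ lies in $\mathsf Z(n_j+S)$, then $x'_j\le x_j=0$ and $x'_i\le x_i\le ub_i$ force $x'$ into the restricted region as well, contradicting the minimality of $x$. Hence the minimal elements of \eqref{eq:oesf} coincide exactly with those minimals of $\mathsf Z(n_j+S)$ having $x_j=0$, and combining this identification with Theorem \ref{theo:1}(1) and Lemma \ref{lemma:1} yields the desired equality. The only step requiring a careful reading is the verification that $ub_{ij}\,\e_i\in\mathsf Z(n_j+S)$ even in the presence of the auxiliary equation $y_j=0$ inside \eqref{eq:ub}; this follows directly from the constraint of that integer program and is the only place where the definition of $ub_{ij}$ is used.
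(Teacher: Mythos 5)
Your argument is correct and takes essentially the same route as the paper, which states the corollary as a direct consequence of Lemma \ref{lemma:1}, Theorem \ref{theo:1} and the remarks immediately preceding it (namely that $\e_j$ is the only minimal element of $\mathsf{Z}(n_j+S)$ with $x_j\neq 0$, and that the feasible points $ub_{ij}\,\e_i$ force $x_i\le ub_i$ on every minimal element); these are exactly the observations you formalize. Your extra verification that restricting the feasible region creates no spurious minimal elements is a detail the paper leaves implicit, but it is the same argument spelled out rather than a different one.
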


Now, computing the omega invariant consists of solving a problem of optimizing a linear function over the efficient region of a bounded integer linear multiobjective problem. In the next section we describe a methodology for solving \eqref{eq:oesf}.

\section{An algorithm for computing $\omega(S,n_j)$}
\label{sec:3}

In this section we adapt the procedure given in \cite{jorge} for solving \eqref{eq:oesf}, and consequently compute the omega invariant of a numerical semigroup.
For the sake of that, we first give some previous results related to each of the steps of the algorithm.

Let $j \in \{1, \ldots, p\}$. The first step of the proposed procedure consists of stating that we effectively need to design an algorithm to compute $\omega(S, n_j)$. With this aim, we consider a relaxed problem of \eqref{eq:oesf} consisting of obviating that the feasible solutions are just the minimal non-dominated solutions of the multiobjective problem, but the feasible solutions of $({\rm SMIP}_j)$. This problem is the following
\begin{equation}
\label{eq:R}
\begin{array}{lll}
\max \;\dsum_{i=1}^n x_i& & \\
s.t.& &\\
& \dsum_{i=1}^p x_i n_i - \dsum_{i=1}^p y_i n_i = n_j&\\
& x_i \leq ub_i\\
& x_j =0\\
& x, y \in \Z_+^p.&
\end{array}\tag{${\rm R}_j$}
\end{equation}
If we solve \eqref{eq:R} and the optimal solution is a non-dominated solution of $({\rm SMIP}_j)$, then, that solution is the solution of \eqref{eq:oesf}. Furthermore, if \eqref{eq:R} is infeasible, \eqref{eq:oesf} has no solution. The next result gives us some information about the above problem.

\begin{lemma}
\label{lemma:3}
Problem \eqref{eq:R} is feasible. Furthermore, the optimal solutions of \eqref{eq:R} are not non-dominated solution of
$({\rm SMIP}_j)$.
\end{lemma}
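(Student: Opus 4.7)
\emph{Proof plan.} I would separate the two assertions---feasibility of \eqref{eq:R}, and the fact that any optimizer of \eqref{eq:R} is not Pareto minimal for $({\rm SMIP}_j)$---and handle them in that order. For feasibility, the simplest certificate is a single-generator point. Fix any $k\ne j$: because $\N\setminus S$ is finite, some sufficiently large multiple of $n_k$ lies in $n_j+S$, so the auxiliary program defining $ub_{kj}$ is feasible, and its minimizer provides a pair $(x,y)$ with $x=ub_{kj}\,\e_k$ and $\sum_i x_i n_i-\sum_i y_i n_i=n_j$. Since $ub_k=\max_l ub_{kl}\ge ub_{kj}$, this $(x,y)$ satisfies every bound of \eqref{eq:R}, so \eqref{eq:R} is feasible.

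For the second assertion the engine is a pumping argument. Let $(x^*,y^*)$ be optimal. For any $m\ne j$ the pair $(x^*+\e_m,\,y^*+\e_m)$ still satisfies $\sum_i x_i n_i-\sum_i y_i n_i=n_j$ and increases the objective by one, so optimality forces $x^*_m+1>ub_m$, that is $x^*_m=ub_m$. Thus $x^*=\sum_{m\ne j}ub_m\,\e_m$, which is positive in every coordinate different from $j$.

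To conclude that this $x^*$ is not Pareto minimal, I would exhibit a strictly smaller feasible point in the componentwise order. The single-generator solution $\tilde x:=ub_{kj}\,\e_k$ produced in the feasibility step satisfies $\tilde x_k=ub_{kj}\le ub_k=x^*_k$ and $\tilde x_i=0\le x^*_i$ for $i\ne k$, so $\tilde x\le x^*$; and whenever there is some index $m\notin\{j,k\}$ with $x^*_m>0$---i.e., whenever $p\ge 3$---one of these inequalities is strict, so $\tilde x<x^*$, and $x^*$ is dominated, hence not a non-dominated solution of $({\rm SMIP}_j)$.

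The main obstacle is the degenerate case $p=2$: there is then no third index to create a strict inequality, and the saturated vector $ub_{3-j}\,\e_{3-j}$ can coincide with the unique Pareto minimum having $x_j=0$. I would handle this boundary case separately, either by appealing to the closed-form description of $\omega$ for embedding dimension two or by inspecting the one-variable subproblem directly, with the expectation that the lemma is really to be read with the tacit assumption $p\ge 3$, where the algorithm is of genuine interest.
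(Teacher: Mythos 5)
Your proof is correct in substance but follows a genuinely different route from the paper's. The paper argues by contradiction: assuming $(x^*,y^*)$ is simultaneously optimal for \eqref{eq:R} and non-dominated for $({\rm SMIP}_j)$, it first excludes the fully saturated case $x^*_i=ub_i$ for all $i\neq j$ (via the disjoint-support property of non-dominated solutions and the minimality of $\{n_1,\dots,n_p\}$), and then pumps the slack coordinate, $\overline{x}=x^*+(ub_k-x^*_k)\e_k$, $\overline{y}=y^*+(ub_k-x^*_k)\e_k$, to contradict optimality. You use the same pumping move, but one unit at a time and without ever invoking non-dominance, to conclude that every optimizer of \eqref{eq:R} \emph{is} the saturated vector $\sum_{m\neq j}ub_m\,\e_m$, and you then dominate it by the explicit feasible point $ub_{kj}\,\e_k$. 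This buys an exact description of the optimum of \eqref{eq:R} and dispenses with both the support lemma and the minimality of the generating system; your feasibility certificate is likewise more concrete than the paper's appeal to Theorem \ref{theo:1}. The price is two caveats. First, the case $p=2$, where your comparison vector coincides with the saturated one: you flag this correctly, and it is genuinely degenerate (for $p=2$ the programs \eqref{eq:ub} are themselves infeasible, so the whole $ub$ apparatus, and hence the lemma, is only meaningful for $p\ge 3$). Second, your justification that ${\rm UB}_{kj}$ is feasible is slightly off: a large multiple of $n_k$ lying in $n_j+S$ only yields a representation that may use $n_j$, whereas \eqref{eq:ub} imposes $y_j=0$, so feasibility of ${\rm UB}_{kj}$ in fact requires $\gcd(n_i:i\neq j)=1$. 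Since the statement of the lemma already presupposes that the bounds $ub_i$ are well defined, this gap is harmless in context, but the sentence ``some sufficiently large multiple of $n_k$ lies in $n_j+S$, so ${\rm UB}_{kj}$ is feasible'' should not be left as a one-line inference.
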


\begin{proof}
By the definition of the $\omega$ invariant and the characterization given in Theorem \ref{theo:1}, it is clear that \eqref{eq:R} cannot be infeasible. Moreover, the problem is bounded due to the finiteness of $\omega(S)$ (see comment after Definition \ref{defi:omega}).

Let $(x^*, y^*)$ an optimal solution of \eqref{eq:R}. Assume first that $x^*_i=ub_i>0$ for all $i=1, \ldots, p$ with $i\neq j$ ($x^*_j=0$). Then, by the comment done before Corollary  \ref{corollary:1}, $y^*_i=0$ for all $i\neq j$, and then, by feasibility $\sum_{i\neq j} x^*_i\,n_i=n_j$, that is not possible since we are supposing that $\{n_1, \ldots, n_p\}$ is a minimal system of generators of $S$. Then, there exists at least one $k\neq j$ such that $x^*_k<ub_k$.

Assume now that $(x^*, y^*)$ is a non-dominated solution of $({\rm SMIP}_j)$, and let $\overline{x}=x^*+(ub_k-x^*_k)\,\e_k$ and $\overline{y}= y^* + (ub_k-x^*_k)\e_k$. By definition $(\overline{x}, \overline{y})$ is a feasible solution of $({\rm SMIP}_j)$ and $\overline{x}>x^*$, and consequently $\sum_{i=1}^p \overline{x} > \sum_{i=1}^p x^*$ contradicting the optimality of $x^*$.
 \end{proof}

The above result justifies the development of an algorithm that allows to improve the optimal solution of \eqref{eq:R} to obtain the optimal value of \eqref{eq:oesf}. In the next step we search for feasible solutions of
$({\rm SMIP}_j)$ that are non-dominated and that dominate the optimal solution of \eqref{eq:R}. For that, we use the following adaptation of the result by Ecker and Kouada \cite{ek}.
\begin{lemma}
\label{lemma:4}
Let $x^*$ be an optimal solution of \eqref{eq:R} and $(\overline{s}, \overline{x}, \overline{y})$ an optimal solution of the following problem
\begin{equation}
\label{eq:ek}
\begin{array}{lll}
\max \;\dsum_{i=1}^n x_i& & \\
s.t.& &\\
& x_i + s_i = x_i^* & i=1, \ldots, p\\
& \dsum_{i=1}^p x_i n_i - \dsum_{i=1}^p y_i n_i = n_j&\\
& x_i \leq ub_i & i=1, \ldots, p\\
& x_j =0\\
& x, y \in \Z_+^p.&
\end{array}\tag{${\rm EK}_j(x^*)$}
\end{equation}
Then, $\overline{x}$ is a non-dominated solution of $({\rm SMIP}_j)$ that dominates $x^*$.
\end{lemma}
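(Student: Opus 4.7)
The plan is to verify three properties of $\overline{x}$: that it is feasible for $({\rm SMIP}_j)$, that it dominates $x^*$ componentwise, and that no feasible point of $({\rm SMIP}_j)$ dominates it. I read the objective of $({\rm EK}_j(x^*))$ as maximizing the total slack $\sum_i s_i$, since under a literal reading the trivial choice $x=x^*$, $s=0$ is already optimal and the conclusion cannot hold.

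Feasibility is immediate: dropping the slack variables from $({\rm EK}_j(x^*))$ leaves exactly the constraints of $({\rm SMIP}_j)$, so $(\overline{x},\overline{y})$ is feasible there. For componentwise domination, the slack equations $\overline{x}_i+\overline{s}_i=x_i^*$ together with $\overline{s}_i\ge 0$ give $\overline{x}_i\le x_i^*$ for every $i$. To obtain strict inequality in some coordinate I would appeal to Lemma \ref{lemma:3}: since $x^*$ is not a non-dominated solution of $({\rm SMIP}_j)$, there exists a feasible pair $(\hat{x},\hat{y})$ with $\hat{x}\le x^*$ and $\hat{x}\ne x^*$. Setting $\hat{s}:=x^*-\hat{x}\ge 0$ then produces a feasible triple $(\hat{s},\hat{x},\hat{y})$ of $({\rm EK}_j(x^*))$ with strictly positive total slack, so the optimal total slack is positive and at least one $\overline{s}_k$ is strictly positive, forcing $\overline{x}_k<x_k^*$.

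Non-domination of $\overline{x}$ I would prove by contradiction. Suppose $(\tilde{x},\tilde{y})$ is feasible for $({\rm SMIP}_j)$ with $\tilde{x}\le\overline{x}$ and $\tilde{x}\ne\overline{x}$; then $\tilde{x}\le x^*$, so $\tilde{s}:=x^*-\tilde{x}$ gives a feasible triple $(\tilde{s},\tilde{x},\tilde{y})$ of $({\rm EK}_j(x^*))$. Its total slack $\sum_i(x_i^*-\tilde{x}_i)$ strictly exceeds $\sum_i(x_i^*-\overline{x}_i)=\sum_i\overline{s}_i$, since $\tilde{x}$ differs from $\overline{x}$ in at least one coordinate while being componentwise no larger, contradicting optimality of $(\overline{s},\overline{x},\overline{y})$. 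The main obstacle is the strict-inequality step in the domination argument: feasibility and the $\le$ comparison are routine bookkeeping, but ruling out the degenerate outcome $\overline{x}=x^*$ rests crucially on Lemma \ref{lemma:3}, which guarantees that some slack-positive feasible point of $({\rm EK}_j(x^*))$ exists to force the optimal slack sum to be positive.
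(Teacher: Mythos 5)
Your argument is correct, and it is worth noting up front that the paper itself offers no proof of this lemma: it is stated as an adaptation of the Ecker--Kouada efficiency test and the reader is referred to \cite{ek}, so there is no in-paper argument to compare against. Your reconstruction is precisely the standard Ecker--Kouada proof, and the three steps (feasibility by dropping the slack equations, $\overline{x}\le x^*$ from $\overline{s}\ge 0$, and non-domination by exhibiting a larger slack sum from any hypothetical dominator) are all sound in the integer setting, where existence of an optimum for \eqref{eq:ek} follows since the slack sum is bounded by $\sum_i x_i^*$. Your most valuable observation is the emendation of the objective: as printed, \eqref{eq:ek} maximizes $\sum_i x_i$ subject to $x\le x^*$, whose unique optimum is $x=x^*$, $s=0$ (feasible because $x^*$ solves \eqref{eq:R}), and then the conclusion fails outright since by Lemma \ref{lemma:3} the point $x^*$ is not a non-dominated solution of $({\rm SMIP}_j)$ and certainly does not dominate itself. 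Reading the objective as $\max\sum_i s_i$ (equivalently $\min\sum_i x_i$ over $x\le x^*$) is the correct Ecker--Kouada formulation and is clearly what is intended, given that the returned triple is $(\overline{s},\overline{x},\overline{y})$. Your use of Lemma \ref{lemma:3} to force a strictly positive optimal slack, and hence strict domination $\overline{x}\ne x^*$, is exactly the right way to close the one step that is not pure bookkeeping.
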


With this lemma, we are able to compute non-dominated solutions of $({\rm SMIP}_j)$ from the solutions of \eqref{eq:R}. However, these solutions may not be the ones that maximize the lengths in \eqref{eq:oesf}.

To move through feasible solutions of our problem to obtain the wished solutions, we use the idea presented in \cite{jorge}, where the author gives a procedure to generate new non-dominated solutions. This procedure uses a result given in the book by Nemhauser and Wolsey \cite{nw}.

\begin{lemma}
\label{lemma:5}
Let $\overline{x}$ be a non-dominated solution ${(\rm{SMIP}_j)}$ and $(\hat{x}, \hat{y})$ an optimal solution of the following problem
\begin{equation}
\label{eq:nw}
\begin{array}{lll}
\max \;\dsum_{i=1}^n x_i& & \\
s.t.& &\\
& x_i \leq z_i (\overline{x}_i-1) - M_i(z_i-1) & i=1, \ldots, p\\
& \dsum_{i=1}^p x_i n_i - \dsum_{i=1}^p y_i n_i = n_j&\\
& x_i \leq ub_i & i=1, \ldots, p\\
& x_j =0\\
& \dsum_{i=1}^p z_i \ge 1&\\
& x, y \in \Z_+^p, z \in \{0,1\}^p&
\end{array}\tag{${\rm NW}_j(\overline{x})$}
\end{equation}
and where $M_j = \max\{x_j: n_jx_j = \dsum_{i\neq j}^p n_i\,y_i, x_i \leq ub_i, i=1, \ldots, p, x, y \in \Z_+^p\}$.

Then, $\hat{x}$ is a feasible solution of ${(\rm{SMIP}_j)}$ that is not dominated by $\overline{x}$.
\end{lemma}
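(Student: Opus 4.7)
The plan is to verify two claims: that $\hat{x}$ is feasible for $({\rm SMIP}_j)$, and that $\overline{x}$ does not component-wise dominate $\hat{x}$ in the minimization sense used throughout the paper. Feasibility should be by direct inspection. The constraints $\dsum_{i=1}^p x_i n_i - \dsum_{i=1}^p y_i n_i = n_j$, $x_j = 0$, $x_i \leq ub_i$, and $(x,y) \in \Z_+^{2p}$ appear verbatim in $({\rm NW}_j(\overline{x}))$, so any optimal triple $(\hat{x}, \hat{y}, \hat{z})$ automatically yields a feasible pair $(\hat{x}, \hat{y})$ for $({\rm SMIP}_j)$; the added $z$-variables and disjunctive cuts only restrict the feasible region further, they do not relax the underlying constraints.

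The key step, and the main point to make rigorous, is the non-domination argument, which rests on recognising the standard big-$M$ disjunction encoding. For each index $i$, the constraint $x_i \leq z_i(\overline{x}_i - 1) - M_i(z_i - 1)$ reduces to $x_i \leq \overline{x}_i - 1$ when $z_i = 1$ and to $x_i \leq M_i$ when $z_i = 0$. Provided $M_i$ is at least as large as $ub_i$, which is what the defining optimization problem in the statement delivers, the case $z_i = 0$ imposes nothing beyond the standing bounds. The constraint $\dsum_{i=1}^p z_i \geq 1$ then forces at least one coordinate $k$ with $\hat{z}_k = 1$, hence $\hat{x}_k \leq \overline{x}_k - 1 < \overline{x}_k$.

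This single strict decrease is precisely what is required: for $\overline{x}$ to dominate $\hat{x}$ in the minimization order of $({\rm SMIP}_j)$ one would need $\overline{x}_i \leq \hat{x}_i$ for every $i$, and the index $k$ prevents that. The main obstacle I anticipate is the bookkeeping associated with the big-$M$ value, namely checking that $M_i$ is large enough to be non-restrictive when $z_i = 0$ while still being finite (so the formulation remains a valid mixed-integer program); this is a routine verification using the definition of $M_i$ given in the statement, and no deeper difficulty is expected.
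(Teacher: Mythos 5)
Your proposal is correct: feasibility follows because the constraints of $({\rm SMIP}_j)$ appear verbatim among those of $({\rm NW}_j(\overline{x}))$, and the constraint $\sum_{i=1}^p z_i \ge 1$ forces some coordinate $k$ with $\hat{x}_k \le \overline{x}_k - 1$, which blocks domination of $\hat{x}$ by $\overline{x}$ in the minimization order. The paper itself states this lemma without proof (attributing it to Nemhauser and Wolsey), and your argument is exactly the standard big-$M$ disjunction reasoning that the cited source uses; your remark that the bound $M_i$ only needs to be non-restrictive for the completeness of the encoding, not for the lemma's stated conclusion, is also accurate.
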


By using the above lemma, in case the solution $\overline{x}$ is not a solution of \eqref{eq:oesf}, we can generate other feasible solution, non comparable with it and such that we can check if it is the one that we are looking for or, if it is not, continue iterating to reach it.

Therefore, if the solution $\overline{x}$ obtained is not optimal for \eqref{eq:oesf} and we find $\hat{x}$ by using Lemma \ref{lemma:5}, $x^*$ can be replaced by $\hat{x}$ in the procedure of Lemma \ref{lemma:4} to obtain a non-dominated solution that dominates $\hat{x}$. If this new non-dominated solution is not the optimal solution of \eqref{eq:oesf} we use again Lemma \ref{lemma:5}  to obtain a non comparable solution with this. But it may occur that this process cycle obtaining $\hat{x}$ again. However, this can be avoided by applying the following extension of Lemma \ref{lemma:5}, and also the reader can find it in \cite{nw}, that allows to obtain a feasible solution of $({\rm SMIP}_j)$ that is not comparable with a given list of non-dominated solutions $\overline{x}^1, \ldots, \overline{x}^s$.
\begin{lemma}
\label{lemma:6}
Let $\overline{x}^1, \ldots, \overline{x}^s$ be non-dominated solutions of  ${(\rm{SMIP}_j)}$ and $(\hat{x}, \hat{y})$ an optimal solution of the following problem
\begin{equation}
\label{eq:enw}
\begin{array}{lll}
\max \;\dsum_{i=1}^n x_i& & \\
s.t.& &\\
& x_i \leq z^k_i (\overline{x}^k_i-1) - M_i(z_i^k-1) & i=1, \ldots, p, k=1, \ldots, s\\
& \dsum_{i=1}^p x_i n_i - \dsum_{i=1}^p y_i n_i = n_j&\\
& x_i \leq ub_i & i=1, \ldots, p\\
& x_j =0\\
& \dsum_{i=1}^p z^k_i \ge 1&k=1,  \ldots, s\\
& x, y \in \Z_+^p, z^k \in \{0,1\}^p& k=1, \ldots, s
\end{array}\tag{${\rm NW}_j(\overline{x}^1, \ldots, \overline{x}^s)$}
\end{equation}
and where $M_j = \max\{x_j: n_jx_j = \dsum_{i\neq j}^p n_i\,y_i, x_i \leq ub_i, i=1, \ldots, p, x, y \in \Z_+^p\}$.

Then, $\hat{x}$ is a feasible solution of ${(\rm{SMIP}_j)}$ that is not dominated by $\overline{x}^1, \ldots, \overline{x}^s$.
\end{lemma}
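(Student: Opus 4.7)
The plan is to mimic the proof of Lemma \ref{lemma:5}, treating \eqref{eq:enw} as the parallel imposition of $s$ independent big-M blocks, one per non-dominated point $\overline{x}^k$. I would therefore argue feasibility once, and then handle the non-dominance condition separately for every index $k \in \{1, \ldots, s\}$, so that the whole argument reduces to applying the single-point case of Lemma \ref{lemma:5} in parallel.

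For feasibility, I would note that the lines of \eqref{eq:enw} not involving the $z^k$ variables are literally the constraints defining $({\rm SMIP}_j)$ together with the bounds and $x_j = 0$ inherited from \eqref{eq:oesf}. Consequently, the projection $\hat{x}$ of any optimal $(\hat{x}, \hat{y}, \hat{z}^1, \ldots, \hat{z}^s)$ onto the $x$-variables is automatically a feasible point of $({\rm SMIP}_j)$, and the existence of $\hat{y}$ certifies that $\hat{x} \in \mathsf Z(n_j + S)$.

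For non-dominance, fix $k$. Since $\hat{z}^k \in \{0,1\}^p$ and $\sum_{i=1}^p \hat{z}^k_i \geq 1$, there exists an index $i_k$ with $\hat{z}^k_{i_k} = 1$. Substituting into the corresponding coordinate constraint yields $\hat{x}_{i_k} \leq \overline{x}^k_{i_k} - 1 < \overline{x}^k_{i_k}$. Because $({\rm SMIP}_j)$ is a componentwise minimization problem, this strict inequality in coordinate $i_k$ prevents $\overline{x}^k \leq \hat{x}$ componentwise, so $\overline{x}^k$ does not dominate $\hat{x}$. As $k$ was arbitrary, $\hat{x}$ is not dominated by any element of $\{\overline{x}^1, \ldots, \overline{x}^s\}$.

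The only delicate point is that for indices with $\hat{z}^k_i = 0$ the constraint reduces to $\hat{x}_i \leq M_i$, which must be non-restrictive on the feasible region of $({\rm SMIP}_j)$; this is precisely the role of the big-M constant $M_i$, chosen so that any feasible $x_i$ already satisfies $x_i \leq M_i$. Beyond this bookkeeping, there is no new obstacle compared with Lemma \ref{lemma:5}: the argument is the standard big-M linearization of the disjunction ``for each $k$, at least one coordinate of $\hat{x}$ is strictly smaller than the corresponding coordinate of $\overline{x}^k$'', extended from a single point to a finite list.
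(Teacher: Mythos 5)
Your proof is correct. The paper itself gives no proof of this lemma (nor of Lemma \ref{lemma:5}); it simply defers to Nemhauser and Wolsey \cite{nw}, and your argument is exactly the standard one that reference supplies: the constraints without the $z^k$ variables reproduce feasibility for $({\rm SMIP}_j)$, while for each $k$ the condition $\sum_i z^k_i \ge 1$ forces some coordinate $i_k$ with $\hat{x}_{i_k} \le \overline{x}^k_{i_k}-1$, so $\overline{x}^k \not\le \hat{x}$ and no $\overline{x}^k$ dominates $\hat{x}$. You are also right to flag the only delicate point, namely that each $M_i$ must be a valid upper bound on $x_i$ over the feasible region so that the constraint is vacuous when $z^k_i=0$; this is the intent of the paper's (somewhat garbled, index-wise) definition of $M_j$, and your treatment of it is adequate.
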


With this method we are able to find a feasible solution that is not comparable with any of the solutions found in the previous iterations (those that we know that are not optimal for \eqref{eq:oes}). We can repeat the process until we get the desired solution, but when do we know that we have got the solution of our problem? To ask this question we analyze the last step of the described procedure.

Assume we have obtained a non-dominated solution of ${\rm SMIP}_j$, $\hat{x}$. If we solve \eqref{eq:enw} and we get $\tilde{x}$ as optimal solution with $\sum_{i=1}^p \tilde{x}_i < \sum_{i=1}^p \hat{x}_i$ then, $\hat{x}$ is the solution to our problem since among all the possible non-dominated solutions, the one that has maximum length has smaller length than $\hat{x}$. Moreover, if \eqref{eq:enw} is infeasible, clearly $\hat{x}$ is our solution.

The general procedure is describe in the pseudo-code of Algorithm \ref{alg:1}

{\small
\begin{algorithm}[h]
\label{alg:1}
\SetKwInOut{Input}{Input}
\SetKwInOut{Output}{Output}
\SetKwInOut{Initialization}{Initialization}
\Input{A minimal generating system of a numerical semigroup $S$: $\{n_1, \ldots, n_p\}$ and $j \in \{1, \ldots, p\}$.}
$l:=1$

Solve \eqref{eq:R}: $x^1$.

\While{$\omega(S, n_j)$ is not obtained}{
\begin{itemize}
\item Set $x^*=x^l$ and solve \eqref{eq:ek}: $\hat{x}^l$.
\item Solve \eqref{eq:enw}: $\overline{x}^l$.
\end{itemize}
\eIf{\eqref{eq:enw} is infeasible or $\sum_{i=1}^p \overline{x}^l_i < \sum_{i=1}^p \hat{x}^l_i$}{\noindent$\hat{x}^l=\omega(S, n_j)$}{\noindent Set $x^{l+1}=\hat{x}$ and $l:=l+1$}
}

\Output{$\omega(S, n_j)$.}
\caption{Computing the omega invariant by optimizing over an efficient set.}
\end{algorithm}}

A direct consequence of all the above comments about the proposed procedure is the following.

\begin{theorem}
Let $S=\langle n_1, \ldots, n_p\langle$ be a numerical semigroup. Algorithm \ref{alg:1} computes $\omega(S, n_j)$ in a finite number of steps. As $\omega(S)=\max \{\omega(S, n_j): j \in 1, \ldots, p\}$, Algorithm \ref{alg:1} also allows the computation of $\omega(S)$.
\end{theorem}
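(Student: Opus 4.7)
The plan is to establish two things: (i) the while loop halts after finitely many passes, and (ii) upon halting the value returned equals $\omega(S,n_j)$; the statement about $\omega(S)$ then follows tautologically from $\omega(S)=\max_j \omega(S,n_j)$ by running the procedure $p$ times.

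For finiteness, I would first observe that the feasible region of the subjacent multiobjective problem $({\rm SMIP}_j)$ contains only finitely many points whose first block of variables lies in $\prod_i\{0,\ldots,ub_i\}$, since the auxiliary variables $y_i$ appearing in the equality $\sum x_in_i-\sum y_in_i=n_j$ are forced into a bounded range once the $x_i$ are bounded (this uses the comment before Corollary \ref{corollary:1}, which lets us assume $x_iy_i=0$, so that $y$ is determined up to finitely many options by $x$). Consequently the set of non-dominated solutions of $({\rm SMIP}_j)$ is finite. I would then show that in iteration $l$ the vector $\hat{x}^l$ produced via Lemma \ref{lemma:4} is distinct from $\hat{x}^1,\ldots,\hat{x}^{l-1}$: by construction $\bar{x}^{l-1}$ is feasible for $({\rm NW}_j(\hat{x}^1,\ldots,\hat{x}^{l-1}))$, so for each $k<l$ there is a coordinate $i$ with $\bar{x}^{l-1}_i<\hat{x}^k_i$; since $\hat{x}^l\le\bar{x}^{l-1}$ componentwise (the Ecker--Kouada reduction never increases a coordinate), the same inequality holds for $\hat{x}^l$, so $\hat{x}^l\ne\hat{x}^k$. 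Thus every iteration strictly enlarges the list of recorded non-dominated solutions, and termination follows from finiteness of the non-dominated set.

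For correctness, I would examine the two halting conditions in turn. If \eqref{eq:enw} is infeasible, then every feasible solution of $({\rm SMIP}_j)$ is componentwise $\ge$ some $\hat{x}^k$, which forces every non-dominated (minimal) solution to coincide with one of $\hat{x}^1,\ldots,\hat{x}^l$; hence by Theorem \ref{theo:1}(1), $\omega(S,n_j)=\max_k|\hat{x}^k|$. If instead $|\bar{x}^l|<|\hat{x}^l|$, then any non-dominated solution $z$ distinct from every $\hat{x}^k$ is feasible for \eqref{eq:enw} and therefore satisfies $|z|\le|\bar{x}^l|<|\hat{x}^l|$, so $\hat{x}^l$ beats every non-recorded non-dominated solution. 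Combining with the trivial bound $|\hat{x}^k|\le\max_{m}|\hat{x}^m|$, the output equals $\omega(S,n_j)$ in both cases.

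The main obstacle I expect is the bookkeeping step of showing that $|\hat{x}^l|$ is in fact the record among $|\hat{x}^1|,\ldots,|\hat{x}^l|$ at the moment the algorithm halts, so that outputting $\hat{x}^l$ — rather than $\arg\max_k|\hat{x}^k|$ — is legitimate. Equivalently, one must prove that the sequence $(|\hat{x}^l|)_l$ is non-decreasing: the update $x^{l+1}:=\bar{x}^l$ continues only when $|\bar{x}^l|\ge|\hat{x}^l|$, and the subsequent Ecker--Kouada step must preserve the length recorded at $\bar{x}^l$ (which is where the precise form of problem \eqref{eq:ek} and of the slack structure $x+s=x^*$ becomes crucial). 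Once this monotonicity is in hand, the two verifications above close the argument.
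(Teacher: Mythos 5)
Your overall route is the same one the paper takes implicitly: the paper states this theorem as ``a direct consequence of all the above comments'' and supplies no separate proof, the intended argument being exactly yours --- termination from the finiteness of the non-dominated set of $({\rm SMIP}_j)$ together with the fact that each pass through Lemmas \ref{lemma:4} and \ref{lemma:6} records a non-dominated solution distinct from all earlier ones, and correctness from the two halting tests. Your termination argument (boundedness of the feasible region once $x_i\le ub_i$ is imposed, plus the observation that $\hat{x}^l\le\overline{x}^{l-1}$ and $\overline{x}^{l-1}$ violates domination by each earlier $\hat{x}^k$ in some coordinate) is in fact spelled out more carefully than anything in the paper.

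The gap you flag at the end is, however, a genuine one, and the way you propose to close it does not work. The two halting tests only certify that $\omega(S,n_j)=\max_{k\le l}|\hat{x}^k|$, whereas Algorithm \ref{alg:1} returns $|\hat{x}^l|$; these coincide only if the sequence $(|\hat{x}^k|)_k$ attains its maximum at the final term. The monotonicity you hope for is not forced by the iteration: problem \eqref{eq:ek} applied to $x^*=\overline{x}^l$ returns a feasible point of \emph{minimum} length below $\overline{x}^l$ (the objective of \eqref{eq:ek} as printed must be read as $\max\sum_i s_i$, as the worked example with $S=\langle 6,13,14\rangle$ confirms, since otherwise it would return $x^*$ itself), and nothing forces that minimum to be at least $|\hat{x}^l|$. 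Nothing in your argument, or in the paper's lemmas, excludes a run in which the minimal elements are discovered with lengths $3,2,2$, after which \eqref{eq:enw} becomes infeasible and the algorithm reports $2$ instead of $3$. The honest repair is not monotonicity but bookkeeping: return the incumbent $\max_{k\le l}|\hat{x}^k|$ and compare the optimal value of \eqref{eq:enw} against that incumbent --- which is precisely what the paper's subsequent remark about maintaining the bounds $u_0$ and $v_0$ amounts to. With that modification your two case analyses do close the proof; without it, neither your argument nor the paper's is complete.
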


The following example illustrates the usage of Algorithm \ref{alg:1}.

\begin{example}
Let $S=\langle 6, 13, 14 \rangle$. The corresponding bounds are $ub_1=9$, $ub_2=2$ and $ub_3=4$ and the constants for \eqref{eq:enw} are $M_1=9$, $M_2=8$ and $M_3=9$.
The optimal solution for ${\rm R}_1$ is $x^1=(0,9,9)$. Solving now $({\rm EK}(x^1))$ we obtain $\hat{x}^1=(0,2,0)$, and finally, the optimal solution of ${\rm NW}(\hat{x}^1)$ is $\overline{x}=(0,1,9)$. Since $\sum_{i=1}^3 \overline{x}_i = 10 > \sum_{i=1}^3 \hat{x}^l_i = 2$, we continue with the algorithm with $x^2=(0,1,9)$.

Computing now the solution of $({\rm EK}(x^1))$ we get $\hat{x}^2=(0,0,3)$ and ${\rm NW}(\hat{x}^1, \hat{x}^2)$ is infeasible, so $\omega(S, 6)=3$.

Analogously we obtain $\omega(S, 13)=9$ and $\omega(S, 14)=7$, being $\omega(S)=\max\{3, 7, 9\} = 9$.
\end{example}
Note that further improvements can be done in the above algorithm by incorporating bounds for the $\omega$ invariant at each step of it. For instance, we can initially set $u_0=0$ and $v_0=\dsum_{i=1}^p x^1_i$, the lower and upper bounds for $\omega(S, n_j)$, respectively. Then, update $u_0$ with the optimal value of \eqref{eq:ek} and $v_0$ with the optimal value of \eqref{eq:enw} when they are improved. With this, we have that either when $u_0=v_0$ or the optimal value of \eqref{eq:enw} is smaller than $u_0$, the solution is the last solution obtained by solving \eqref{eq:ek}. We have included these bounds to our implementation for the computational results, although some others could be added by incorporating these bounds conveniently to the constraints of the problems.

Moreover, if the Apery sets of the numerical semigroup are known, the following lemma, that appears in \cite{B-GS-G10}, helps us to simplify the problem of searching minimal solutions.
\begin{lemma}[\cite{B-GS-G10}]
Let $S = \langle n_1, \ldots, n_p\rangle$ be a numerical semigroup, $j \in \{1, \ldots, p\}$, and $(x^*, y^*) \in  ({\rm SMIP}_j)$. Then, if $x^*_i\neq 0$, $\dsum_{k=1}^p n_k\,y_k^* \in Ap(S, n_i)$.
\end{lemma}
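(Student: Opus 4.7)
The plan is to argue by contradiction, exploiting the minimality of $x^*$ in $\mathsf Z(n_j+S)$ (which is what I interpret $(x^*, y^*)\in ({\rm SMIP}_j)$ to mean here, in line with how the multiobjective subproblem is used throughout Section \ref{sec:3} and with part (2) of Theorem \ref{theo:omega}). Recall that, by definition, $\sum_{k=1}^p n_k y_k^* \in Ap(S, n_i)$ means $\sum_{k=1}^p n_k y_k^* \in S$ (which is immediate since $y^*\in \N^p$) together with $\sum_{k=1}^p n_k y_k^* - n_i \notin S$. So the real content is the second condition.

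Suppose, toward a contradiction, that $x_i^*\neq 0$ and $\sum_{k=1}^p n_k y_k^* - n_i \in S$. Then I can pick $z\in \N^p$ with $\sum_{k=1}^p n_k z_k = \sum_{k=1}^p n_k y_k^* - n_i$. Now define a new pair $(x',y')$ by $x'_i := x_i^* - 1$ (which lies in $\N$ because $x_i^*\geq 1$), $x'_k := x_k^*$ for $k\neq i$, and $y' := z$. The key step is to verify feasibility in $({\rm SMIP}_j)$: a direct substitution gives
\[
\sum_{k=1}^p x'_k n_k - \sum_{k=1}^p y'_k n_k = \sum_{k=1}^p x_k^* n_k - n_i - \sum_{k=1}^p z_k n_k = \sum_{k=1}^p x_k^* n_k - \sum_{k=1}^p y_k^* n_k = n_j,
\]
using the definition of $z$ in the middle equality.

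Thus $(x',y')$ is a feasible solution whose $x$-component satisfies $x'\leq x^*$ component-wise and $x'_i < x_i^*$, so $x'$ strictly dominates $x^*$ with respect to the component-wise order on $\mathsf Z(n_j+S)$. This contradicts $x^*\in \text{\rm Minimals}\, \mathsf Z(n_j+S)$, and the result follows.

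There is essentially no serious obstacle: the only thing to pin down is the right replacement construction, and the minus-one-on-coordinate-$i$ move is forced once you see that $x^*_i\neq 0$ is exactly what lets you decrement that coordinate while absorbing the missing $n_i$ into the $y$-variables via the hypothetical expression $\sum_k n_k z_k = \sum_k n_k y_k^* - n_i$. The slight conceptual point worth flagging in the writeup is the convention that $(x^*, y^*)\in ({\rm SMIP}_j)$ refers to a non-dominated (i.e., Pareto minimal) solution of the multiobjective subproblem rather than an arbitrary feasible one, since for a merely feasible $(x^*, y^*)$ the statement is false.
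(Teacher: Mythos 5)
Your proof is correct and follows essentially the same route as the paper's: both arguments observe that $x^*_i\neq 0$ allows one to decrement the $i$-th coordinate, and that minimality of $x^*$ in $\mathsf Z(n_j+S)$ forces $\dsum_{k=1}^p n_k y_k^* - n_i\notin S$, which is exactly the Ap\'ery condition. Your contrapositive write-up (explicitly constructing the dominating pair $(x',y')$) is in fact cleaner than the paper's version, which contains notational slips ($\e_j$ for $\e_i$ and a garbled displayed identity), and your closing remark about interpreting $(x^*,y^*)\in({\rm SMIP}_j)$ as a non-dominated rather than merely feasible solution is the right reading.
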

\begin{proof}
Choose $i$ such that $x^*_i\neq 0$. Since $x^*$ is feasible for $({\rm SMIP}_j)$, $\dsum_{k=1}^pn_k\,x_k^*=n_j+\dsum_{k=1}^p n_k\,y_k^*$. From the minimality of $(x^*, y^*)$, $x^*-\e_j$ cannot be minimal, that is, $\dsum_{k=1}^pn_k\,x_k^*-n_i$ cannot be expressed as a integer linear combination of $n_1, \ldots, n_p$, or equivalently, $\dsum_{k=1}^pn_k\,x_k^*-n_j-n_i=n_j+\dsum_{k=1}^p n_k\,y_k^*-n_j\not\in S$. This implies that $\dsum_{k=1}^p n_k\,y_k^*\in \Ap(S,n_i)$.
\end{proof}
Then , with the above result, the equation defining the feasible region of $ ({\rm SMIP}_j)$ can be written as
\begin{equation}
\dsum_{k=1}^pn_k\,x_k=n_j+ w \qquad w \in \bigcup_{i\neq j} \Ap(S, n_i)
\end{equation}

Since the above constraint is not linear, we can add part of the information contained on it by incorporating inequalities in the form $\sum_{i=1}^p n_i\,y_i \leq \max \bigcup_{i=1}^p \Ap(S, n_i)$ or $\sum_{i=1}^p n_i\,y_i \geq \min \bigcup_{i=1}^p \Ap(S, n_i) = \min\{n_1, \ldots, n_p\}$
\section{Computational experiments}
\label{sec:4}
In this section we present the results of some computational experiments done to analyze the applicability of the algorithm proposed in Section \ref{sec:3}. For the sake of that we have generated a set of instances of numerical semigroups with different numbers of minimal generators. From the computational viewpoint, up to the moment, we can only compare our algorithm with a recent implementation done in GAP \cite{numericalsgps} for \cite{B-GS-G10} (available upon request), and that consists of enumerating all the feasible solutions of $({\rm SMIP}_j)$, comparing them to determine which are the minimal solutions, and finally selecting those with largest length.

Our algorithm has been implemented in XPRESS-Mosel 7.0 that allows to solve the single-objective integer problems involved in the resolution of the $\omega$ invariant, by using a branch-and-bound method and nesting models by calling the library \texttt{mmjobs}. The algorithms have been executed on a PC with an
Intel Core 2 Quad processor at 2x 2.50 Ghz and 4 GB of RAM.

It is clear that the complexity of the algorithm depends of the dimension of the space (embedding dimension of the numerical semigroups) and the size of the coefficient of the constraints. Then, we have generated for each $p \in \{5, 10, 15, 20\}$, five instances of $p$ non-negative integers ranging in $[2, 1000]$. Note that larger instances are difficult to find since even for $p=20$, selecting $20$ non-negative integers with great common divisor equal to one and such that they are a minimal system of generators of a numerical semigroup is not an easy task. We have used recursively the function \texttt{RandomListForNS} of GAP until we found the list of integer with the above requirements. In Table \ref{table:1} we show the battery of numerical semigroup for which we have run the algorithms. In the first column we write the name we give to the semigroup, in the second column the set of minimal generating system of the numerical semigroup, and in the last column the embedding dimension of the semigroup.

\begin{longtable}{|c|>{\scriptsize}c|c|}\hline
\texttt{name} & \texttt{minimal generating system} & \texttt{embdim}\\\hline
\texttt{S}$5(1)$ & $\{20,354,402,417,429\}$ & \\
\texttt{S}$5(2)$ & $\{7,292,359,645,755\}$ & \\
\texttt{S}$5(3)$ & $\{5,86,99,148,152\}$ & $5$\\
\texttt{S}$5(4)$ & $\{41,65,155,317,377\}$ & \\
\texttt{S}$5(5)$ & $\{28,55,125,233,590\}$ & \\\hline
\texttt{S}$10(1)$ & $\{43,63,68,108,120,135,142,150,177,224\}$& \\
\texttt{S}$10(2)$ & $\{15,46,58,89,108,114,117,126,130,173\}$& \\
\texttt{S}$10(3)$ & $\{20,22,24,26,54,77,83,89,93,95\}$& $10$\\
\texttt{S}$10(4)$ & $\{96,131,136,171,173,239,278,287,364,483\}$& \\
\texttt{S}$10(5)$ & $\{146,173,207,359,426,548,604,606,657,702\}$& \\\hline\newpage\hline
\texttt{S}$15(1)$ & $\{36,47,65,79,82,84,91,96,100,109,121,124,134,139,169\}$& \\
\texttt{S}$15(2)$ & $\{46,115,155,286,289,341,342,348,393,436,445,449,504,527,584\}$& \\
\texttt{S}$15(3)$ & $\{40,84,126,130,132,135,142,152,165,183,217,221,229,273,323\}$ & $15$\\
\texttt{S}$15(4)$ & $\{75, 104, 114, 128, 216, 219, 241, 271, 309, 310, 321, 327, 340, 352, 371\}$\\
\texttt{S}$15(5)$ & $\{29, 50, 95, 96, 99, 109, 110, 119, 131, 134, 135, 152, 162, 180, 201\}$ &\\\hline
\texttt{S}$20(1)$ &  $\{131,145,249,257,260,319,354,459,465,469,487,572,575,587,606,607,652,674,694,762\}$&\\
\texttt{S}$20(2)$ & $\{ 57, 105, 182, 186, 201, 204, 254, 259, 263, 274, 275, 294, 295, 298, 307, 338, 367, 393, 417, 431\}$&\\
\texttt{S}$20(3)$ & $\{ 85, 298, 333, 342, 349, 358, 401, 415, 462, 480, 556, 569, 583, 609, 619,  708, 710, 752, 821, 853\}$& $20$\\
\texttt{S}$20(4)$ &  $\{81, 107, 168, 194, 230, 236, 274, 277, 286,  290, 305, 310, 348, 351, 366, 379, 396, 416, 521, 583\}$&\\
\texttt{S}$20(5)$ & $\{101, 141, 279, 314, 329, 369, 399, 425, 438, 447, 477, 501, 534, 536, 555, 574, 620, 727, 786, 871 \}$ &\\ \hline

\caption{Battery of numerical semigroups for the computational experiments.}\label{table:1}
\end{longtable}

In tables \ref{table:2} and \ref{table:3} we summarize the obtained results. In the first column we write the numerical semigroup ($S$). The second column is
 the generator with respect to the $\omega$ invariant is computed ($\texttt{n}_j$). The value of $\omega(\texttt{S}, n_j)$ is shown in the third column (in this same column we mark  in bold face, 
for each numerical semigroup, the maximum value that coincides with $\omega(\texttt{S})$). One of the minimal vectors that reach the 
maximum length is shown in the fourth column. The next column, the fifth, shows the number of iterations of our algorithm ($\texttt{it}$). The CPU time (in seconds) needed to 
compute $\omega(S, n_j)$ with our algorithm  (\texttt{time}$_j$)  and with GAP (\texttt{GAPtime} -  $\ast$ indicates that GAP was not able to solve the problem
 in less than $2$ hours) is shown in columns sixth and seventh, respectively. 
The eigth shows the total  CPU time needed to compute $\omega(S)$ (\texttt{totaltime}). The average CPU time to compute each of the 
$\omega(S, n_j)$ (\texttt{avtime}) is written in column nineth. The last column is the number of minimal (nondominated) elements of the corresponding problem that was computed with GAP  ($\ast$ indicates that GAP was not able to compute this number in less than two hours).

For the computations of the experiments for $p=15, 20$ we add to the constraints of the multiobjective problem the inequalities $\sum_{i=1}^p n_i\,y_i \leq \max \bigcup_{i=1}^p \Ap(S, n_i)$ or $\sum_{i=1}^p n_i\,y_i \geq \min \bigcup_{i=1}^p \Ap(S, n_i) = \min\{n_1, \ldots, n_p\}$.
{\small
  \centering
    \begin{longtable}{|c|c|c|>{\small}c|c|c|c|c|c|c|}
\hline
    \texttt{S}     & $\texttt{n}_j$  & $\omega(\texttt{S}, \texttt{n}_j)$ & \texttt{min}   & \texttt{it}  & $\texttt{time}_j$    & \texttt{GAPtime} & \texttt{tottime} & \texttt{avtime} & $\#\texttt{min}$\\\hline
          & 20    & 4     & [0,0,0,0,4] & 9     & 0.54  & 6.03  &       &       & 12 \\
          & 354   & 60    & [60,0,0,0,0] & 12    & 0.95  & 11.35 &       &       & 14 \\
    S5(1) & 402   & 63    & [63,0,0,0,0] & 16    & 1.439 & 12.54 & 5.921 & 1.184 & 17 \\
          & 417   & 60    & [60,0,0,0,0] & 15    & 1.43  & 12.68 &       &       & 16 \\
          & 429   & 60    & [60,0,0,0,0] & 17    & 1.55  & 12.43 &       &       & 20 \\\hline
          & 7     & 3     & [0,3,0,0,0] & 10    & 0.55  & 12.48 &       &       & 11 \\
          & 292   & 93    & [93,0,0,0,0] & 9     & 0.37  & 23.72 &       &       & 11 \\
    S5(2) & 359   & 93    & [93,0,0,0,0] & 11    & 0.43  & 27.33 & 2.84  & 0.56  & 13 \\
          & 645   & 200   & [200,0,0,0,0] & 14    & 0.67  & 45.92 &       &       & 15 \\
          & 755   & 200   & [200,0,0,0,0] & 18    & 0.81  & 75.59 &       &       & 19 \\\hline
          & 5     & 2     & [0,0,0,2,0] & 8     & 0.285 & 1.201 &       &       & 11 \\
          & 86    & 37    & [37,0,0,0,0] & 11    & 0.294 & 2.527 &       &       & 12 \\
    S5(3) & 99    & 37    & [37,0,0,0,0] & 11    & 0.34  & 2.82  & 1.69  & 0.33  & 12 \\
          & 148   & 60    & [60,0,0,0,0] & 12    & 0.37  & 4.1   &       &       & 13 \\
          & 152   & 60    & [60,0,0,0,0] & 12    & 0.39  & 2.29  &       &       & 13 \\\hline
          & 41    & 14    & [0,14,0,0,0] & 12    & 0.893 & 5.64  &       &       & 14 \\
          & 65    & 22    & [22,0,0,0,0] & 13    & 0.988 & 6.02  &       &       & 14 \\
    S5(4) & 155   & 24    & [24,0,0,0,0] & 16    & 1.1   & 8.22  & 7.39  & 1.47  & 18 \\
          & 317   & 22    & [21,0,1,0,0] & 22    & 2.916 & 13.96 &       &       & 28 \\
          & 377   & 31    & [31,0,0,0,0] & 18    & 1.49  & 18.7  &       &       & 35 \\\hline
          & 28    & 10    & [0,10,0,0,0] & 11    & 0.5   & 10.71 &       &       & 12 \\
          & 55    & 25    & [25,0,0,0,0] & 8     & 0.381 & 11.45 &       &       & 12 \\
    S5(5) & 125   & 27    & [27,0,0,0,0] & 13    & 0.71  & 20.18 & 4.719 & 0.94  & 15 \\
          & 233   & 26    & [26,0,0,0,0] & 13    & 0.732 & 42.37 &       &       & 17 \\
          & 590   & 30    & [24,5,0,1,0] & 23    & 2.38  & 109.38 &       &       & 48 \\\hline
          & 43    & 5     & [0,0,5,0,0,0,0,0,0,0] & 49    & 3.36  & 5.41  &       &       & 58 \\
          & 63    & 8     & [8,0,0,0,0,0,0,0,0,0] & 48    & 2.7   & 8.61  &       &       & 65 \\
          & 68    & 8     & [8,0,0,0,0,0,0,0,0,0] & 49    & 3.16  & 13.18 &       &       & 69 \\
          & 108   & 7     & [5,0,2,0,0,0,0,0,0,0] & 52    & 4.15  & 18.26 &       &       & 81 \\
          & 120   & 8     & [8,0,0,0,0,0,0,0,0,0] & 57    & 4.24  & 12.65 &       &       & 94 \\
    S10(1) & 135   & 9     & [9,0,0,0,0,0,0,0,0,0] & 68    & 5.95  & 15.5  & 67.89 & 6.78  & 108 \\
          & 142   & 9     & [9,0,0,0,0,0,0,0,0,0] & 88    & 9.24  & 16.75 &       &       & 125 \\
          & 150   & 7     & [4,2,1,0,0,0,0,0,0,0] & 66    & 8.07  & 19.85 &       &       & 116 \\
          & 177   & 9     & [7,2,0,0,0,0,0,0,0,0] & 70    & 6.88  & 49.26 &       &       & 149 \\
          & 224   & 9     & [7,0,2,0,0,0,0,0,0,0] & 113   & 20.1  & 65.16 &       &       & 246 \\\hline
          & 15    & 3     & [0,0,3,0,0,0,0,0,0,0] & 36    & 1.801 & 6.64  &       &       & 45 \\
          & 46    & 9     & [9,0,0,0,0,0,0,0,0,0] & 39    & 1.66  & 10.32 &       &       & 48 \\
          & 58    & 10    & [10,0,0,0,0,0,0,0,0,0] & 38    & 1.69  & 12.23 &       &       & 50 \\
          & 89    & 9     & [7,0,1,0,0,0,0,1,0,0] & 47    & 2.681 & 17.33 &       &       & 68 \\
          & 108   & 15    & [15,0,0,0,0,0,0,0,0,0] & 63    & 3.278 & 24    &       &       & 83 \\
    S10(2) & 114   & 16    & [16,0,0,0,0,0,0,0,0,0] & 57    & 3.07  & 28.81 & 35.87 & 3.58  & 78 \\
          & 117   & 15    & [15,0,0,0,0,0,0,0,0,0] & 63    & 4.316 & 21.65 &       &       & 88 \\
          & 126   & 16    & [16,0,0,0,0,0,0,0,0,0] & 73    & 4.243 & 22.48 &       &       & 99 \\
          & 130   & 22    & [22,0,0,0,0,0,0,0,0,0] & 64    & 3.399 & 38.59 &       &       & 98 \\
          & 173   & 23    & [23,0,0,0,0,0,0,0,0,0] & 107   & 9.73  & 80.49 &       &       & 161 \\\hline
          & 20    & 4     & [0,0,0,4,0,0,0,0,0,0] & 39    & 1.48  & 5.1   &       &       & 43 \\
          & 22    & 5     & [5,0,0,0,0,0,0,0,0,0] & 43    & 1.59  & 5.41  &       &       & 45 \\
          & 24    & 5     & [5,0,0,0,0,0,0,0,0,0] & 36    & 1.3   & 5.41  &       &       & 45 \\
          & 26    & 5     & [3,2,0,0,0,0,0,0,0,0] & 33    & 1.64  & 3.49  &       &       & 44 \\
    S10(3) & 54    & 6     & [0,6,0,0,0,0,0,0,0,0] & 52    & 2.77  & 14.05 & 99.49 & 9.94  & 88 \\
          & 77    & 9     & [7,0,2,0,0,0,0,0,0,0] & 93    & 13.27 & 26.72 &       &       & 176 \\
          & 83    & 9     & [6,0,2,1,0,0,0,0,0,0] & 109   & 19.41 & 33.83 &       &       & 198 \\
          & 89    & 10    & [10,0,0,0,0,0,0,0,0,0] & 100   & 13.85 & 41.18 &       &       & 219 \\
          & 93    & 10    & [10,0,0,0,0,0,0,0,0,0] & 109   & 21.75 & 46.17 &       &       & 254 \\
          & 95    & 10    & [10,0,0,0,0,0,0,0,0,0] & 114   & 22.4  & 52.46 &       &       & 251 \\\hline
          & 131   & 7     & [5,0,0,0,2,0,0,0,0,0] & 63    & 8.34  & 61.44 &       &       & 102 \\
          & 136   & 6     & [3,1,0,0,2,0,0,0,0,0] & 47    & 7.23  & 54.38 &       &       & 88 \\
          & 171   & 6     & [2,2,0,0,2,0,0,0,0,0] & 65    & 11.18 & 56.92 &       &       & 102 \\
          & 173   & 7     & [3,1,3,0,0,0,0,0,0,0] & 60    & 9.87  & 116.22 &       &       & 118 \\
    S10(4) & 239   & 8     & [5,2,0,0,0,0,0,1,0,0] & 83    & 16.81 & 104.66 & 225.55 & 22.55 & 155 \\
          & 278   & 10    & [10,0,0,0,0,0,0,0,0,0] & 80    & 14.93 & 129.1 &       &       & 208 \\
          & 287   & 10    & [10,0,0,0,0,0,0,0,0,0] & 62    & 11.628 & 128.1 &       &       & 178 \\
          & 364   & 10    & [7,3,0,0,0,0,0,0,0,0] & 128   & 34.053 & 227.12 &       &       & 260 \\
          & 483   & 11    & [9,1,0,0,0,0,1,0,0,0] & 204   & 105.146 & 497   &       &       & 427 \\\hline
          & 146   & 8     & [0,6,2,0,0,0,0,0,0,0] & 42    & 8.048 & 100.82 &       &       & 70 \\
          & 173   & 10    & [10,0,0,0,0,0,0,0,0,0] & 71    & 15.43 & 115.39 &       &       & 99 \\
          & 207   & 10    & [10,0,0,0,0,0,0,0,0,0] & 60    & 11.77 & 138.87 &       &       & 82 \\
          & 359   & 12    & [7,5,0,0,0,0,0,0,0,0] & 60    & 14.69 & 198.246 &       &       & 152 \\
    S10(5) & 426   & 12    & [12,0,0,0,0,0,0,0,0,0] & 77    & 16.23 & 290.08 & 315.14 & 31.51 & 130 \\
          & 548   & 12    & [0,12,0,0,0,0,0,0,0,0] & 105   & 38.525 & 470.76 &       &       & 209 \\
          & 604   & 15    & [15,0,0,0,0,0,0,0,0,0] & 124   & 43.81 & 499.9 &       &       & 244 \\
          & 606   & 13    & [13,0,0,0,0,0,0,0,0,0] & 98    & 28.4  & 422.96 &       &       & 243 \\
          & 657   & 12    & [0,8,4,0,0,0,0,0,0,0] & 105   & 65.01 & 558.71 &       &       & 244 \\
          & 702   & 14    & [14,0,0,0,0,0,0,0,0,0] & 159   & 73.19 & 718.58 &       &       & 362 \\\hline
\caption{Results of computational experiments for $p=5, 10$.}
\label{table:2}
    \end{longtable}}
{\small
  \centering
    \begin{longtable}{|c|c|c|>{\small}c|c|c|c|c|c|c|}
\hline
    \texttt{S}     & $\texttt{n}_j$  & $\omega(\texttt{S}, \texttt{n}_j)$ & \texttt{min}   & \texttt{it}  & $\texttt{time}_j$    & \texttt{GAPtime} & \texttt{tottime} & \texttt{avtime} & $\#\texttt{min}$\\\hline
           & 47    & 6     & [6,0,0,0,0,0,0,0,0,0,0,0,0,0,0] & 114   & 8.386 & 13.78 &       &       & 129 \\
          & 65    & 5     & [4,0,0,1,0,0,0,0,0,0,0,0,0,0,0] & 112   & 10.358 & 35.64 &       &       & 159 \\
          & 79    & 5     & [3,1,0,0,1,0,0,0,0,0,0,0,0,0,0] & 105   & 9.392 & 7.21  &       &       & 165 \\
          & 82    & 6     & [0,6,0,0,0,0,0,0,0,0,0,0,0,0,0] & 141   & 13.664 & 17.93 &       &       & 184 \\
          & 84    & 6     & [4,1,0,0,1,0,0,0,0,0,0,0,0,0,0] & 112   & 11.156 & 28.24 &       &       & 192 \\
          & 91    & 7     & [7,0,0,0,0,0,0,0,0,0,0,0,0,0,0] & 101   & 6.863 & 9.34  &       &       & 173 \\
    S15(1) & 96    & 7     & [4,3,0,0,0,0,0,0,0,0,0,0,0,0,0] & 104   & 11.98 & 52.225 & 612.099 & 40.8066 & 250 \\
          & 100   & 8     & [8,0,0,0,0,0,0,0,0,0,0,0,0,0,0] & 187   & 26.425 & 29.725 &       &       & 251 \\
          & 109   & 7     & [7,0,0,0,0,0,0,0,0,0,0,0,0,0,0] & 129   & 12.659 & 35.725 &       &       & 245 \\
          & 121   & 8     & [8,0,0,0,0,0,0,0,0,0,0,0,0,0,0] & 154   & 19.271 & 48.225 &       &       & 307 \\
          & 124   & 8     & [8,0,0,0,0,0,0,0,0,0,0,0,0,0,0] & 214   & 37.796 & 203.8 &       &       & 364 \\
          & 134   & 7     & [5,1,1,0,0,0,0,0,0,0,0,0,0,0,0] & 168   & 29.652 & 241.9 &       &       & 383 \\
          & 139   & 8     & [8,0,0,0,0,0,0,0,0,0,0,0,0,0,0] & 183   & 30.122 & 199.05 &       &       & 394 \\
          & 169   & 8     & [5,2,0,0,0,1,0,0,0,0,0,0,0,0,0] & 285   & 38.405 & 164.625 &       &       & 680 \\\hline
          & 46    & 5     & [0,1,3,0,1,0,0,0,0,0,0,0,0,0,0] & 83    & 8.383 & 79.4  &       &       & 98 \\
          & 115   & 6     & [2,0,3,0,1,0,0,0,0,0,0,0,0,0,0] & 94    & 10.454 & 112.65 &       &       & 109 \\
          & 155   & 17    & [17,0,0,0,0,0,0,0,0,0,0,0,0,0,0] & 123   & 14.728 & 139.425 &       &       & 151 \\
          & 286   & 15    & [12,0,3,0,0,0,0,0,0,0,0,0,0,0,0] & 137   & 20.015 & 291.65 &       &       & 206 \\
          & 289   & 15    & [15,0,0,0,0,0,0,0,0,0,0,0,0,0,0] & 109   & 14.545 & 293.575 &       &       & 190 \\
          & 341   & 17    & [17,0,0,0,0,0,0,0,0,0,0,0,0,0,0] & 174   & 65.975 & 401   &       &       & 252 \\
          & 342   & 15    & [14,0,0,0,1,0,0,0,0,0,0,0,0,0,0] & 192   & 32.986 & 406.775 &       &       & 265 \\
    S15(2) & 348   & 15    & [13,0,2,0,0,0,0,0,0,0,0,0,0,0,0] & 193   & 113.383 & 427.3 & 1683.63 & 112.242 & 291 \\
          & 393   & 20    & [20,0,0,0,0,0,0,0,0,0,0,0,0,0,0] & 228   & 135.869 & 550.35 &       &       & 320 \\
          & 436   & 25    & [25,0,0,0,0,0,0,0,0,0,0,0,0,0,0] & 273   & 74.036 & 736.575 &       &       & 413 \\
          & 445   & 24    & [24,0,0,0,0,0,0,0,0,0,0,0,0,0,0] & 311   & 96.198 & 784.625 &       &       & 434 \\
          & 449   & 19    & [19,0,0,0,0,0,0,0,0,0,0,0,0,0,0] & 294   & 82.945 & 795.45 &       &       & 425 \\
          & 504   & 22    & [22,0,0,0,0,0,0,0,0,0,0,0,0,0,0] & 354   & 177.154 & 1161.45 &       &       & 594 \\
          & 527   & 20    & [20,0,0,0,0,0,0,0,0,0,0,0,0,0,0] & 367   & 166.69 & 1345.275 &       &       & 610 \\
          & 584   & 26    & [26,0,0,0,0,0,0,0,0,0,0,0,0,0,0] & 438   & 670.269 & 1988.875 &       &       & 737 \\\hline
          & 40    & 3     & [0,1,0,0,1,0,0,0,0,1,0,0,0,0,0] & 89    & 7.53  & 24.85 &       &       & 113 \\
          & 84    & 8     & [8,0,0,0,0,0,0,0,0,0,0,0,0,0,0] & 119   & 11.648 & 39.95 &       &       & 139 \\
          & 126   & 10    & [10,0,0,0,0,0,0,0,0,0,0,0,0,0,0] & 114   & 13.082 & 67.375 &       &       & 195 \\
          & 130   & 10    & [10,0,0,0,0,0,0,0,0,0,0,0,0,0,0] & 117   & 12.165 & 68.425 &       &       & 188 \\
          & 132   & 10    & [10,0,0,0,0,0,0,0,0,0,0,0,0,0,0] & 126   & 12.849 & 69.1  &       &       & 186 \\
          & 135   & 10    & [10,0,0,0,0,0,0,0,0,0,0,0,0,0,0] & 149   & 23.85 & 72.4  &       &       & 212 \\
          & 142   & 10    & [10,0,0,0,0,0,0,0,0,0,0,0,0,0,0] & 156   & 20.35 & 78.35 &       &       & 216 \\
    S15(3) & 152   & 8     & [8,0,0,0,0,0,0,0,0,0,0,0,0,0,0] & 176   & 22.83 & 90.65 & 1407.684 & 93.8456 & 267 \\
          & 165   & 12    & [12,0,0,0,0,0,0,0,0,0,0,0,0,0,0] & 183   & 31.199 & 109.15 &       &       & 305 \\
          & 183   & 10    & [10,0,0,0,0,0,0,0,0,0,0,0,0,0,0] & 189   & 38.537 & 134.025 &       &       & 324 \\
          & 217   & 11    & [9,1,0,0,1,0,0,0,0,0,0,0,0,0,0] & 244   & 77.582 & 219.075 &       &       & 446 \\
          & 221   & 11    & [11,0,0,0,0,0,0,0,0,0,0,0,0,0,0] & 250   & 70.445 & 229.625 &       &       & 454 \\
          & 229   & 13    & [13,0,0,0,0,0,0,0,0,0,0,0,0,0,0] & 274   & 89.679 & 253.775 &       &       & 478 \\
          & 273   & 14    & [14,0,0,0,0,0,0,0,0,0,0,0,0,0,0] & 353   & 133.473 & 475.725 &       &       & 701 \\
          & 323   & 15    & [15,0,0,0,0,0,0,0,0,0,0,0,0,0,0] & 578   & 842.465 & 3487.55 &       &       & 1076 \\\hline
          & 75    & 5     & [0,4,0,1,0,0,0,0,0,0,0,0,0,0,0] & 92    & 8.229 & 42.65 &       &       & 123 \\
          & 104   & 6     & [4,0,0,1,1,0,0,0,0,0,0,0,0,0,0] & 96    & 10.708 & 52.2  &       &       & 140 \\
          & 114   & 7     & [5,0,0,2,0,0,0,0,0,0,0,0,0,0,0] & 86    & 8.044 & 54.875 &       &       & 144 \\
          & 128   & 7     & [5,0,2,0,0,0,0,0,0,0,0,0,0,0,0] & 104   & 10.528 & 61.075 &       &       & 147 \\
          & 216   & 8     & [3,0,5,0,0,0,0,0,0,0,0,0,0,0,0] & 151   & 23.513 & 126.85 &       &       & 254 \\
          & 219   & 9     & [8,1,0,0,0,0,0,0,0,0,0,0,0,0,0] & 145   & 19.017 & 126.95 &       &       & 251 \\
          & 241   & 9     & [9,0,0,0,0,0,0,0,0,0,0,0,0,0,0] & 139   & 22.934 & 156.65 &       &       & 302 \\
    S15(4) & 271   & 8     & [4,3,1,0,0,0,0,0,0,0,0,0,0,0,0] & 188   & 52.956 & 202.2 & 892.038 & 59.4692 & 357 \\
          & 309   & 9     & [6,1,1,1,0,0,0,0,0,0,0,0,0,0,0] & 211   & 62.039 & 284.25 &       &       & 439 \\
          & 310   & 10    & [10,0,0,0,0,0,0,0,0,0,0,0,0,0,0] & 188   & 34.511 & 281.525 &       &       & 429 \\
          & 321   & 12    & [12,0,0,0,0,0,0,0,0,0,0,0,0,0,0] & 269   & 67.673 & 320.65 &       &       & 523 \\
          & 327   & 10    & [10,0,0,0,0,0,0,0,0,0,0,0,0,0,0] & 185   & 43.396 & 341.225 &       &       & 502 \\
          & 340   & 11    & [11,0,0,0,0,0,0,0,0,0,0,0,0,0,0] & 212   & 42.852 & 374.325 &       &       & 509 \\
          & 352   & 10    & [7,3,0,0,0,0,0,0,0,0,0,0,0,0,0] & 262   & 300.513 & 2434.3 &       &       & 622 \\
          & 371   & 11    & [11,0,0,0,0,0,0,0,0,0,0,0,0,0,0] & 315   & 185.125 & 505.925 &       &       & 661 \\\hline
          & 29    & 5     & [0,5,0,0,0,0,0,0,0,0,0,0,0,0,0] & 97    & 3.899 & 258.275 &       &       & 106 \\
          & 50    & 5     & [5,0,0,0,0,0,0,0,0,0,0,0,0,0,0] & 99    & 4.911 & 533.775 &       &       & 116 \\
          & 95    & 6     & [3,2,0,0,0,1,0,0,0,0,0,0,0,0,0] & 115   & 9.017 & 73.725 &       &       & 154 \\
          & 96    & 10    & [10,0,0,0,0,0,0,0,0,0,0,0,0,0,0] & 120   & 7.652 & 24.55 &       &       & 171 \\
          & 99    & 9     & [9,0,0,0,0,0,0,0,0,0,0,0,0,0,0] & 127   & 9.08  & 25.7  &       &       & 170 \\
          & 109   & 9     & [9,0,0,0,0,0,0,0,0,0,0,0,0,0,0] & 138   & 11.984 & 30.225 &       &       & 187 \\
          & 110   & 10    & [10,0,0,0,0,0,0,0,0,0,0,0,0,0,0] & 124   & 7.249 & 31.025 &       &       & 192 \\
    S15(5) & 119   & 11    & [11,0,0,0,0,0,0,0,0,0,0,0,0,0,0] & 127   & 9.411 & 36.375 & 685.921 & 45.72807 & 211 \\
          & 131   & 10    & [9,1,0,0,0,0,0,0,0,0,0,0,0,0,0] & 121   & 10.939 & 45.3  &       &       & 252 \\
          & 134   & 11    & [11,0,0,0,0,0,0,0,0,0,0,0,0,0,0] & 174   & 17.188 & 48.075 &       &       & 258 \\
          & 135   & 11    & [11,0,0,0,0,0,0,0,0,0,0,0,0,0,0] & 173   & 20.678 & 225.525 &       &       & 288 \\
          & 152   & 10    & [8,2,0,0,0,0,0,0,0,0,0,0,0,0,0] & 180   & 28.469 & 144.375 &       &       & 338 \\
          & 162   & 9     & [8,0,0,0,1,0,0,0,0,0,0,0,0,0,0] & 205   & 32.88 & 83.55 &       &       & 357 \\
          & 180   & 10    & [8,2,0,0,0,0,0,0,0,0,0,0,0,0,0] & 269   & 71.89 & 269.85 &       &       & 471 \\
          & 201   & 14    & [14,0,0,0,0,0,0,0,0,0,0,0,0,0,0] & 306   & 440.674 & 1883.525 &       &       & 584 \\\hline
          & 131   & 8     & [0,8,0,0,0,0,0,0,0,0,0,0,0,0,0,0,0,0,0,0] & 188   & 35.321 & 321.325 &       &       & 264 \\
          & 145   & 8     & [8,0,0,0,0,0,0,0,0,0,0,0,0,0,0,0,0,0,0,0] & 191   & 34.252 & 332.3 &       &       & 265 \\
          & 249   & 9     & [6,2,0,1,0,0,0,0,0,0,0,0,0,0,0,0,0,0,0,0] & 233   & 54.57 & 550.725 &       &       & 340 \\
          & 257   & 9     & [6,2,0,1,0,0,0,0,0,0,0,0,0,0,0,0,0,0,0,0] & 233   & 53.913 & 569.15 &       &       & 352 \\
          & 260   & 8     & [8,0,0,0,0,0,0,0,0,0,0,0,0,0,0,0,0,0,0,0] & 197   & 47.428 & 573.775 &       &       & 355 \\
          & 319   & 9     & [1,7,0,1,0,0,0,0,0,0,0,0,0,0,0,0,0,0,0,0] & 244   & 90.809 & 785.925 &       &       & 451 \\
          & 354   & 9     & [4,5,0,0,0,0,0,0,0,0,0,0,0,0,0,0,0,0,0,0] & 256   & 97.12 & 938.925 &       &       & 500 \\
          & 459   & 10    & [0,10,0,0,0,0,0,0,0,0,0,0,0,0,0,0,0,0,0,0] & 398   & 182.085 & 1700.525 &       &       & 787 \\
          & 465   & 9     & [9,0,0,0,0,0,0,0,0,0,0,0,0,0,0,0,0,0,0,0] & 356   & 363.725 & 1747.575 &       &       & 752 \\
    S20(1) & 469   & 11    & [3,8,0,0,0,0,0,0,0,0,0,0,0,0,0,0,0,0,0,0] & 317   & 239.548 & 1802.75 & 19603.67 & 980.1835 & 796 \\
          & 487   & 9     & [9,0,0,0,0,0,0,0,0,0,0,0,0,0,0,0,0,0,0,0] & 384   & 408.842 & 2011.8 &       &       & 865 \\
          & 572   & 12    & [6,6,0,0,0,0,0,0,0,0,0,0,0,0,0,0,0,0,0,0] & 399   & 826.33 & 3290.4 &       &       & 1160 \\
          & 575   & 10    & [0,10,0,0,0,0,0,0,0,0,0,0,0,0,0,0,0,0,0,0] & 542   & 3123.4 & 3414.425 &       &       & 1235 \\
          & 587   & 12    & [12,0,0,0,0,0,0,0,0,0,0,0,0,0,0,0,0,0,0,0] & 487   & 1356.94 & 3657.525 &       &       & 1273 \\
          & 606   & 11    & [11,0,0,0,0,0,0,0,0,0,0,0,0,0,0,0,0,0,0,0] & 507   & 2100.84 & 4119.5 &       &       & 1389 \\
          & 607   & 10    & [5,5,1,0,1,0,1,0,0,0,0,0,0,0,0,0,0,0,0,0] & 787   & 2894.21 & 4181.7 &       &       & 1387 \\
          & 652   & 11    & [9,0,0,0,0,3,0,0,0,0,0,0,0,0,0,0,0,0,0,0] & 683   & 2013.25 & 5538.2 &       &       & 1673 \\
          & 674   & 12    & [12,0,0,0,0,0,0,0,0,0,0,0,0,0,0,0,0,0,0,0] & 821   & 1999.747 & 6288.875 &       &       & 1732 \\
          & 694   & 11    & [11,0,0,0,0,0,0,0,0,0,0,0,0,0,0,0,0,0,0,0] & 726   & 2991.08 & 7185.075 &       &       & 1851 \\
          & 762   & 12    & [12,0,0,0,0,0,0,0,0,0,0,0,0,0,0,0,0,0,0,0] & 118   & 690.26 & 11142.2 &       &       & 2375 \\\hline
          & 57    & 4     & [0,2,0,0,1,0,0,0,0,0,0,0,0,1,0,0,0,0,0,0] & 146   & 15.003 & 108.725 &       &       & 186 \\
          & 105   & 7     & [7,0,0,0,0,0,0,0,0,0,0,0,0,0,0,0,0,0,0,0] & 169   & 20.95 & 158.525 &       &       & 211 \\
          & 182   & 9     & [6,3,0,0,0,0,0,0,0,0,0,0,0,0,0,0,0,0,0,0] & 195   & 31.558 & 311.125 &       &       & 304 \\
          & 186   & 11    & [11,0,0,0,0,0,0,0,0,0,0,0,0,0,0,0,0,0,0,0] & 272   & 61.55 & 328.45 &       &       & 364 \\
          & 201   & 14    & [14,0,0,0,0,0,0,0,0,0,0,0,0,0,0,0,0,0,0,0] & 319   & 300.1 & 374.2 &       &       & 409 \\
          & 204   & 9     & [9,0,0,0,0,0,0,0,0,0,0,0,0,0,0,0,0,0,0,0] & 284   & 63.721 & 394.55 &       &       & 427 \\
          & 254   & 9     & [8,0,0,0,1,0,0,0,0,0,0,0,0,0,0] & 378   & 153.266 & 635.725 &       &       & 599 \\
          & 259   & 10    & [7,3,0,0,0,0,0,0,0,0,0,0,0,0,0,0,0,0,0,0] & 295   & 72.789 & 653.9 &       &       & 530 \\
          & 263   & 10    & [6,4,0,0,0,0,0,0,0,0,0,0,0,0,0,0,0,0,0,0] & 294   & 69.81 & 695.8 &       &       & 612 \\
    S20(2) & 274   & 9     & [4,5,0,0,0,0,0,0,0,0,0,0,0,0,0,0,0,0,0,0] & 336   & 172.839 & 751.05 & 11703.9 & 585.1952 & 587 \\
          & 275   & 11    & [8,3,0,0,0,0,0,0,0,0,0,0,0,0,0,0,0,0,0,0] & 414   & 217.261 & 798.075 &       &       & 702 \\
          & 294   & 8     & [5,1,1,0,1,0,0,0,0,0,0,0,0,0,0,0,0,0,0,0] & 359   & 396.194 & 915.575 &       &       & 612 \\
          & 295   & 11    & [8,3,0,0,0,0,0,0,0,0,0,0,0,0,0,0,0,0,0,0] & 488   & 2342.7 & 975.65 &       &       & 802 \\
          & 298   & 12    & [12,0,0,0,0,0,0,0,0,0,0,0,0,0,0,0,0,0,0,0] & 463   & 174.03 & 990.675 &       &       & 773 \\
          & 307   & 10    & [6,4,0,0,0,0,0,0,0,0,0,0,0,0,0,0,0,0,0,0] & 391   & 187.431 & 1084.4 &       &       & 808 \\
          & 338   & 13    & [13,0,0,0,0,0,0,0,0,0,0,0,0,0,0,0,0,0,0,0] & 484   & 432.837 & 1546.65 &       &       & 1001 \\
          & 367   & 14    & [14,0,0,0,0,0,0,0,0,0,0,0,0,0,0,0,0,0,0,0] & 608   & 2000.37 & 2113.475 &       &       & 1248 \\
          & 393   & 12    & [3,9,0,0,0,0,0,0,0,0,0,0,0,0,0,0,0,0,0,0] & 658   & 2531.045 & 2889.325 &       &       & 1502 \\
          & 417   & 11    & [5,0,2,0,0,0,0,0,0,2,3,0,0,0,0,0,0,0,0,0] & 563   & 1093.34 & 3737.325 &       &       & 1686 \\
          & 431   & 14    & [6,8,0,0,0,0,0,0,0,0,0,0,0,0,0,0,0,0,0,0] & 781   & 1367.11 & *     &       &       & * \\\hline
          & 85    & 4     & [0,4,0,0,0,0,0,0,0,0,0,0,0,0,0,0,0,0,0,0] & 147   & 25.646 & 341.175 &       &       & 230 \\
          & 298   & 15    & [15,0,0,0,0,0,0,0,0,0,0,0,0,0,0,0,0,0,0,0] & 316   & 99.513 & 864.5 &       &       & 455 \\
          & 333   & 15    & [15,0,0,0,0,0,0,0,0,0,0,0,0,0,0,0,0,0,0,0] & 322   & 91.273 & 1007.75 &       &       & 465 \\
          & 342   & 16    & [16,0,0,0,0,0,0,0,0,0,0,0,0,0,0,0,0,0,0,0] & 326   & 99.809 & 1026.075 &       &       & 466 \\
          & 349   & 16    & [16,0,0,0,0,0,0,0,0,0,0,0,0,0,0,0,0,0,0,0] & 307   & 76.393 & 1075.375 &       &       & 512 \\
          & 358   & 15    & [15,0,0,0,0,0,0,0,0,0,0,0,0,0,0,0,0,0,0,0] & 324   & 86.003 & 1092.975 &       &       & 480 \\
          & 401   & 12    & [10,0,0,0,0,2,0,0,0,0,0,0,0,0,0,0,0,0,0,0] & 394   & 154.683 & 1372.975 &       &       & 631 \\
          & 415   & 16    & [16,0,0,0,0,0,0,0,0,0,0,0,0,0,0,0,0,0,0,0] & 448   & 293.327 & 1474  &       &       & 687 \\
          & 462   & 15    & [15,0,0,0,0,0,0,0,0,0,0,0,0,0,0,0,0,0,0,0] & 361   & 135.922 & 1827.75 &       &       & 691 \\
    S20(3) & 480   & 14    & [14,0,0,0,0,0,0,0,0,0,0,0,0,0,0,0,0,0,0,0] & 527   & 261.581 & 1982.85 & 8912.075 & 445.6038 & 786 \\
          & 556   & 16    & [16,0,0,0,0,0,0,0,0,0,0,0,0,0,0,0,0,0,0,0] & 610   & 592.666 & 2975.05 &       &       & 1028 \\
          & 569   & 18    & [18,0,0,0,0,0,0,0,0,0,0,0,0,0,0,0,0,0,0,0] & 695   & 2453.98 & 3284.75 &       &       & 1158 \\
          & 583   & 19    & [19,0,0,0,0,0,0,0,0,0,0,0,0,0,0,0,0,0,0,0] & 711   & 1496.19 & 3440.55 &       &       & 1164 \\
          & 609   & 15    & [15,0,0,0,0,0,0,0,0,0,0,0,0,0,0,0,0,0,0,0] & 57    & 11.671 & 4037.25 &       &       & 1290 \\
          & 619   & 13    & [12,0,0,0,0,0,0,0,0,0,0,0,0,0,0,0,0,0,0,0] & 912   & 990.061 & 4518.725 &       &       & 1386 \\
          & 708   & 18    & [18,0,0,0,0,0,0,0,0,0,0,0,0,0,0,0,0,0,0,0] & 582   & 569.397 & *     &       &       & * \\
          & 710   & 15    & [11,3,0,0,0,0,0,0,0,0,0,0,0,0,0,0,0,0,0,0] & 733   & 673.064 & *     &       &       & * \\
          & 752   & 18    & [18,0,0,0,0,0,0,0,0,0,0,0,0,0,0,0,0,0,0,0] & 777   & 722.174 & *     &       &       & * \\
          & 821   & 21    & [18,0,0,0,0,0,1,0,0,0,0,0,0,0,0,0,0,0,0,0] & 108   & 35.333 & *     &       &       & * \\
          & 853   & 21    & [19,0,0,0,0,1,0,0,0,0,0,0,0,0,0,0,0,0,0,0] & 108   & 43.389 & *     &       &       & * \\\hline
          & 81    & 5     & [0,5,0,0,0,0,0,0,0,0,0,0,0,0,0,0,0,0,0,0] & 140   & 17.706 & 219.175 &       &       & 214 \\
          & 107   & 6     & [6,0,0,0,0,0,0,0,0,0,0,0,0,0,0,0,0,0,0,0] & 176   & 27.955 & 264.375 &       &       & 251 \\
          & 168   & 9     & [7,2,0,0,0,0,0,0,0,0,0,0,0,0,0,0,0,0,0,0] & 185   & 26.037 & 416.725 &       &       & 291 \\
          & 194   & 9     & [6,3,0,0,0,0,0,0,0,0,0,0,0,0,0,0,0,0,0,0] & 239   & 57.189 & 527.75 &       &       & 427 \\
          & 230   & 8     & [4,4,0,0,0,0,0,0,0,0,0,0,0,0,0,0,0,0,0,0] & 186   & 39    & 707.575 &       &       & 471 \\
          & 236   & 9     & [7,2,0,0,0,0,0,0,0,0,0,0,0,0,0,0,0,0,0,0] & 198   & 49.312 & 735.875 &       &       & 474 \\
          & 274   & 9     & [8,0,1,0,0,0,0,0,0,0,0,0,0,0,0,0,0,0,0,0] & 284   & 524.051 & 1027.225 &       &       & 590 \\
          & 277   & 9     & [7,2,0,0,0,0,0,0,0,0,0,0,0,0,0,0,0,0,0,0] & 276   & 113.266 & 1079.7 &       &       & 679 \\
          & 286   & 9     & [6,2,1,0,0,0,0,0,0,0,0,0,0,0,0,0,0,0,0,0] & 321   & 676.315 & 1143.675 &       &       & 698 \\
    S20(4) & 290   & 8     & [1,7,0,0,0,0,0,0,0,0,0,0,0,0,0,0,0,0,0,0] & 312   & 1775.88 & 1177.15 & 11215.3 & 560.7652 & 630 \\
          & 305   & 10    & [7,3,0,0,0,0,0,0,0,0,0,0,0,0,0,0,0,0,0,0] & 256   & 188.763 & 1345.125 &       &       & 683 \\
          & 310   & 10    & [10,0,0,0,0,0,0,0,0,0,0,0,0,0,0,0,0,0,0,0] & 297   & 85.818 & 1407.6 &       &       & 704 \\
          & 348   & 10    & [7,3,0,0,0,0,0,0,0,0,0,0,0,0,0,0,0,0,0,0] & 403   & 392.226 & 2039.675 &       &       & 953 \\
          & 351   & 11    & [11,0,0,0,0,0,0,0,0,0,0,0,0,0,0,0,0,0,0,0] & 432   & 193.712 & 2079.4 &       &       & 949 \\
          & 366   & 10    & [9,0,0,1,0,0,0,0,0,0,0,0,0,0,0,0,0,0,0,0] & 383   & 295.208 & 2346.175 &       &       & 912 \\
          & 379   & 10    & [3,7,0,0,0,0,0,0,0,0,0,0,0,0,0,0,0,0,0,0] & 296   & 1007.8 & 2735.2 &       &       & 1116 \\
          & 396   & 11    & [10,1,0,0,0,0,0,0,0,0,0,0,0,0,0,0,0,0,0,0] & 560   & 3095.66 & 3283.85 &       &       & 1222 \\
          & 416   & 12    & [12,0,0,0,0,0,0,0,0,0,0,0,0,0,0,0,0,0,0,0] & 541   & 693.549 & *     &       &       & * \\
          & 521   & 14    & [14,0,0,0,0,0,0,0,0,0,0,0,0,0,0,0,0,0,0,0] & 611   & 955.771 & *     &       &       & * \\
          & 583   & 14    & [14,0,0,0,0,0,0,0,0,0,0,0,0,0,0,0,0,0,0,0] & 982   & 1000.085 & *     &       &       & * \\\hline
          & 101   & 8     & [0,8,0,0,0,0,0,0,0,0,0,0,0,0,0,0,0,0,0,0] & 57    & 8.393 & 488.75 &       &       & 246 \\
          & 141   & 7     & [7,0,0,0,0,0,0,0,0,0,0,0,0,0,0,0,0,0,0,0] & 146   & 33.759 & 567.975 &       &       & 260 \\
          & 279   & 12    & [12,0,0,0,0,0,0,0,0,0,0,0,0,0,0,0,0,0,0,0] & 99    & 23.478 & 1170.65 &       &       & 502 \\
          & 314   & 10    & [10,0,0,0,0,0,0,0,0,0,0,0,0,0,0,0,0,0,0,0] & 199   & 68.421 & 1328.55 &       &       & 457 \\
          & 329   & 11    & [7,4,0,0,0,0,0,0,0,0,0,0,0,0,0,0,0,0,0,0] & 85    & 17.706 & 1428.15 &       &       & 461 \\
          & 369   & 11    & [5,5,0,1,0,0,0,0,0,0,0,0,0,0,0,0,0,0,0,0] & 106   & 25.178 & 1747.875 &       &       & 493 \\
          & 399   & 11    & [7,4,0,0,0,0,0,0,0,0,0,0,0,0,0,0,0,0,0,0] & 156   & 90.957 & 2166.55 &       &       & 711 \\
          & 425   & 11    & [5,6,0,0,0,0,0,0,0,0,0,0,0,0,0,0,0,0,0,0] & 65    & 26.208 & 2477.4 &       &       & 718 \\
          & 438   & 13    & [13,0,0,0,0,0,0,0,0,0,0,0,0,0,0,0,0,0,0,0] & 115   & 37.799 & 2648.8 &       &       & 732 \\
    S20(5) & 447   & 15    & [15,0,0,0,0,0,0,0,0,0,0,0,0,0,0,0,0,0,0,0] & 60    & 11.342 & 2771.675 & 10007.05 & 500.3524 & 808 \\
          & 477   & 14    & [14,0,0,0,0,0,0,0,0,0,0,0,0,0,0,0,0,0,0,0] & 138   & 51.901 & 3357.7 &       &       & 929 \\
          & 501   & 16    & [16,0,0,0,0,0,0,0,0,0,0,0,0,0,0,0,0,0,0,0] & 40    & 7.425 & 3884.325 &       &       & 1026 \\
          & 534   & 12    & [12,0,0,0,0,0,0,0,0,0,0,0,0,0,0,0,0,0,0,0] & 180   & 145.075 & 4557.05 &       &       & 983 \\
          & 536   & 14    & [14,0,0,0,0,0,0,0,0,0,0,0,0,0,0,0,0,0,0,0] & 83    & 23.15 & 4752.25 &       &       & 1090 \\
          & 555   & 13    & [9,3,0,1,0,0,0,0,0,0,0,0,0,0,0,0,0,0,0,0] & 166   & 63.804 & 5404.225 &       &       & 1190 \\
          & 574   & 13    & [13,0,0,0,0,0,0,0,0,0,0,0,0,0,0,0,0,0,0,0] & 345   & 777.094 & *     &       &       & * \\
          & 620   & 14    & [14,0,0,0,0,0,0,0,0,0,0,0,0,0,0,0,0,0,0,0] & 721   & 654.063 & *     &       &       & * \\
          & 727   & 18    & [18,0,0,0,0,0,0,0,0,0,0,0,0,0,0,0,0,0,0,0] & 882   & 2140.435 & *     &       &       & * \\
          & 786   & 17    & [17,0,0,0,0,0,0,0,0,0,0,0,0,0,0,0,0,0,0,0] & 734   & 3000.11 & *     &       &       & * \\
          & 871   & 17    & [17,0,0,0,0,0,0,0,0,0,0,0,0,0,0,0,0,0,0,0] & 813   & 2800.75 & *     &       &       & * \\\hline
\hline
    \caption{Results of computational experiments for $p=15, 20$.}
\label{table:3}
    \end{longtable}}

From the computational tests we can assert that our method improves the current algorithm implemented in GAP used to compute the $\omega$ invariant of a numerical semigroup because several reasons:
\begin{itemize}
\item The CPU times of our algorithm are much smaller than the brute force implementation in GAP. Observe that the average quotient between $\texttt{time}_j$ and $\texttt{GAPtime}$ is $0.23$.

\item Our algorithm avoids the complete enumeration of the set of non-dominated solutions of $({\rm SMIP}_j)$. Note that the average percentage of explored solutions ($\texttt{it}_j$) with respect to the number of non-dominated solutions ($\#\texttt{min}$) is $58.9\%$ while in GAP the 100\% of the solutions are explored. The algorithm proposed in \cite{AChKT10} also needs to compute a large number of solutions to obtain the $\omega$ invariant, so although we have not explicitly compared the running times with this method, our algorithm only searches a part of the efficient solutions and then, it must take much smaller CPU times.
\item Algorithm \ref{alg:1} is able to compute the $\omega$ invariant of the whole battery of experiments while the GAP method was not able to solve 14 problems (*).
    \end{itemize}

\section{Conclusions}

In this paper we propose a new methodology, based on discrete optimization tools, to compute a specific arithmetic invariant for numerical semigroups. We use a result in \cite{B-GS-G10} (Theorem \ref{theo:omega})  to translate the algebraic problem to the problem of optimizing a linear function over the non-dominated solutions of a multiobjective integer linear program that is basically a knapsack-type problem. Then, an adaptation of the methodology in  \cite{jorge} is applied to compute more efficiently than the current methods, the $\omega$ invariant of a numerical semigroup. The key here is to avoid the complete enumeration of the non-dominated solutions of the multiobjective problem to solve the problem.

The algorithm may be improved by using better bounds or by adding more information to the problem. It is here where bidirectional improvements in both fields, optimization and commutative algebra, may help to have better computational results.

Note that the proposed methodology can be adapted conveniently to compute the $\omega$ invariant of some other commutative semigroups. In particular, in the case when the semigroup is defined by a system of linear Diophantine equations, like in full and affine semigroups (also called saturated semigroups), from Theorem \ref{theo:omega} a similar result than the one in Theorem \ref{theo:1} can be derived but where the feasible region consists of several equations instead of the knapsack type equation. For those semigroups, can also be adapted Algorithm \ref{alg:1} to compute the $\omega$ invariant.

Furthermore, some other arithmetic invariant, as the tame degree, should also be modeled as a discrete optimization problem, and optimization techniques can help to solve more efficiently these problems. The analysis of different arithmetic invariant by discrete optimization is left for further research. Different approaches but also using tools from operational research may be used to solve problems for numerical semigroups such as the computation of minimal decomposition of a numerical semigroup into irreducible numerical semigroups or to define new indices that can be defined as optimal values of a linear objective function over the Kunz's polytope.

 We hope that this paper is the first of many bidirectional collaborations between the fields of algebra and operation research. The two fields seem to be very different from each other, but that may help to get new results in both areas and to understand with different viewpoints notions that usually live in one of them.

}

\end{document}